\theoremstyle{plain}
\newtheorem{theorem}{Theorem}
\newtheorem{prop}{Proposition}[section]
\newtheorem{lemma}{Lemma}[section]
\theoremstyle{definition}
\newtheorem{definition}{Definition}
\newtheorem{remark}{Remark}
\def\ra{\rightarrow}
\def\rest{\text{\rm rest}}
\newcommand{\gitq}{/\hspace{-0.25pc}/}
\def\co{\colon\thinspace} 
\DeclareMathOperator{\Pic}{Pic}
\DeclareMathOperator{\proj}{Proj}
\DeclareMathOperator{\mult}{mult}
\DeclareMathOperator{\Sing}{Sing}
\DeclareMathOperator{\Exc}{Exc}
\DeclareMathOperator{\NS}{NS}
\def\Hn1{\mathcal{H}_{n,1}}
\renewcommand\min{\text{min}}
\def\A{\mathcal{A}}
\def\B{\mathcal{B}}
\def\C{\mathcal{C}}
\def\O{\mathcal{O}}
\newcommand\Mg[1]{\overline{\mathcal{M}}_{#1}}
\def\QQ{\mathbb{Q}}
\def\ZZ{\mathbb{Z}}
\def\PP{\mathbb{P}}
\def\ZZ{\mathbb{Z}}
\DeclareMathOperator\SL{SL}
\def\nb{\nobreakdash}
\begin{document}
\title{Moduli of weighted stable curves and log canonical models of $\Mg{g,n}$}
\author{Maksym Fedorchuk}
\address{Department of Mathematics, Columbia University, 2990 Broadway, New York, NY 10027}
\curraddr{}
\email{mfedorch@math.columbia.edu}

\begin{abstract} We prove that Hassett's spaces $\Mg{g,\A}$ are log canonical models of $\Mg{g,n}$.
\end{abstract}
\maketitle

\section{Introduction}
\label{S:introduction}

In \cite{Hassett-weighted}, Hassett constructed spaces $\Mg{g,\A}$ of {\em weighted pointed
stable curves}, each of which is a compactification of  $\mathcal{M}_{g,n}$. 
We establish nefness (and in some cases ampleness) of a number of divisors on $\Mg{g,\A}$ by exploiting the 
polarizing line bundle on the universal family over $\Mg{g,\A}$, which is known to be nef by Koll\'{a}r's semipositivity results \cite{kollar}. 
 As an application, we prove that the coarse moduli space of $\Mg{g,\A}$ is a log canonical model of $\Mg{g,n}$ (Theorem \ref{T:hassett-spaces}) for an arbitrary weight vector $\A$. 
In the case of $g=0$, this solves Problem 7.1 of \cite{Hassett-weighted} (see also ``Concluding Remarks'' of \cite{Alexeev-Swinarski}) and extends the results of \cite{Alexeev-Swinarski}, \cite{FS} and \cite{Simpson}.

We now recall the definition of $\Mg{g,\A}$ from \cite{Hassett-weighted}: Given a {\em weight vector} $\A=(a_1,\dots, a_n)\in (0,1]^n\cap \QQ^n$, we say that an $n$\nb-pointed
connected at-worst nodal curve $(C; p_1,\dots, p_n)$ of genus $g$ is {\em $\A$\nb-stable} if 
\begin{enumerate}
\item $p_1,\dots, p_n$ are nonsingular points of $C$.
\item $\omega_C(\sum_{i=1}^n a_ip_i)$ is ample.
\item $\mult_{x} \sum_{i=1}^n a_ip_i\leq 1$, $\forall x\in C$.
\end{enumerate}
The objects of the moduli stack $\Mg{g,\A}$ are flat families whose geometric fibers are $\A$\nb-stable curves of genus $g$.

Consider the universal family $(\pi\co \C \ra \Mg{g,\A}, \{\sigma_i\}_{i=1}^n)$ of $\A$\nb-stable curves;
here, $\{\sigma_i\}_{i=1}^n$ are the universal sections. By the definition above, the line bundle 
$\omega_{\pi}(\sum_{i=1}^n a_i\sigma_i)$ is $\pi$\nb-nef. It turns out that it is also nef on $\C$. We use this fact to produce nef divisors on $\Mg{g,\A}$. That $\omega_{\pi}(\sum_{i=1}^n a_i\sigma_i)$ is nef on $\C$ was proved by Koll\'{a}r
in \cite[Corollary 4.6 and Proposition 4.7]{kollar}. When $a_i=1$, $i=1,\dots,n$, i.e.  $\Mg{g,\A}=\Mg{g,n}$,
the nefness of $\omega_{\pi}(\sum_{i=1}^n \sigma_i)$ was also established by Keel \cite[Theorem 0.4]{keel}. We give yet another proof that $\omega_{\pi}(\sum_{i=1}^n a_i\sigma_i)$
is nef in Proposition \ref{P:relatively-nef}; the argument is elementary and is included because it gives a finer information of curves on which $\omega_{\pi}(\sum_{i=1}^n a_i\sigma_i)$ has degree $0$. 
The latter is used to establish ampleness of certain log canonical divisors on 
$\Mg{g,\A}$ (Theorem \ref {T:positive-divisors}). 

Even though we phrase all the results for
an arbitrary genus, the positive genus case follows from that of genus $0$ using 
a well-known positivity result on $\Mg{g}$. For this reason, we isolate nef
divisors on $\Mg{0,n}$ produced by our construction in Section \ref{S:F-cone}. These include all divisors contained in the {\em GIT 
cone} of $\Mg{0,n}$ \cite{Alexeev-Swinarski}, as shown in Proposition \ref{P:GIT}. We leave open the question whether 
two cones are in fact the same.

We work over an algebraically closed base field of an arbitrary characteristic.

\section{Main positivity result} 
\label{S:main}

Let $B$ be a smooth curve. Consider a flat proper family $\pi\co \C  \ra B$ of connected at-worst nodal curves of arithmetic genus $g$.
Suppose that $\pi$ has $n$, not necessarily distinct, sections 
\[
\sigma_1,\dots,\sigma_n\co B \ra \C.
\]
 Denote $\sigma_i(B)$ by $\Sigma_i$, and 
$\sum_{i=1}^n \Sigma_i$ by $\Sigma$. From now
on, we assume that every $\Sigma_i$ lies in $\C \setminus \Sing(\C/B)$. We denote both the relative dualizing line bundle of $\pi$ and its 
numerical class by
$\omega$. We denote the numerical class of a fiber of $\pi$ by $F$. 

\begin{prop}\label{P:relatively-nef} Let $\C \ra B$ be a generically smooth family of nodal curves of arithmetic genus $g$. Consider a weight vector $\A=\bigl(a_1,\dots, a_n\bigr)\subset (0,1]^n\cap \QQ^n$ such that 
\[
L:=\omega+\sum_{i=1}^n a_i\Sigma_i
\]
is $\pi$-nef. Suppose further that sections $\Sigma_{i_1}, \dots, \Sigma_{i_k}$ 
can coincide only if $\sum_{\ell=1}^k a_{i_\ell}\leq 1$.
Then $L$ is a nef divisor on $\C$.  
 \end{prop}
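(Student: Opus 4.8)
The plan is to verify $L\cdot C\ge 0$ for every irreducible curve $C\subset\C$, separating the vertical and horizontal cases. If $C$ lies in a fiber, then $L\cdot C\ge 0$ is exactly the hypothesis that $L$ is $\pi$-nef, so nothing is to be done; the whole content is in the horizontal curves, i.e.\ those dominating $B$. Given such a $C$, I first reduce to the case of a \emph{section}: let $\nu\co\tilde C\ra C$ be the normalization, put $h:=\pi\circ\nu\co\tilde C\ra B$ (a finite map of smooth curves), and base-change $\pi$ along $h$. The tautological lift of $\nu$ is then a section $T$ of $\pi'\co\C':=\C\times_B\tilde C\ra\tilde C$, its image in $\C$ is $C$, and the projection $G\co\C'\ra\C$ restricts to a birational map $T\ra C$, so $L\cdot C=G^*L\cdot T$ by the projection formula. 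Since the relative dualizing sheaf commutes with base change, $\pi$-nefness is preserved (the fibers are unchanged), and the coincidence hypothesis is inherited by the pulled-back sections, it suffices to treat a section $T$ lying in the smooth locus of the total space.

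For such a section, adjunction on $\C'$ (smooth along $T\cong\tilde C$) gives $\omega\cdot T=-T^2$: from $K_{\C'}=\omega+\pi'^*\omega_{\tilde C}$ and $\pi'^*\omega_{\tilde C}\cdot T=\deg\omega_{\tilde C}$, the identity $(K_{\C'}+T)\cdot T=\deg\omega_{\tilde C}$ forces $\omega\cdot T+T^2=0$. Hence
\[
L\cdot T=-T^2+\sum_{i=1}^n a_i(\Sigma_i\cdot T).
\]
Let $S=\{\,i:\Sigma_i=T\,\}$. For $i\in S$ we have $\Sigma_i\cdot T=T^2$, and all these sections coincide, so $\sum_{i\in S}a_i\le 1$ by hypothesis; for $i\notin S$ the distinct sections meet properly, so $\Sigma_i\cdot T\ge 0$. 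With $c:=1-\sum_{i\in S}a_i\in[0,1]$ this becomes $L\cdot T=-c\,T^2+\sum_{i\notin S}a_i(\Sigma_i\cdot T)$, the sum being $\ge 0$. When $T^2\le 0$ both terms are nonnegative and we are done.

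The only remaining, and genuinely delicate, case is $T^2>0$, where $-c\,T^2$ may be negative; note that such a $T$ is nef and big. My first attempt is downward induction on $T^2$: since $T\cdot F=1$, any member of $|T|$ other than $T$ meets the general fiber once and so splits as $T'_0+V$ with $T'_0$ a section and $V\ge 0$ vertical, whence $L\cdot T=L\cdot T'_0+L\cdot V\ge L\cdot T'_0$ by $\pi$-nefness, while $(T'_0)^2=T^2-2\,T\cdot V+V^2\le T^2$ because vertical divisors have nonpositive self-intersection; this lowers $T^2$ whenever a nonzero vertical part splits off, propagating the bound toward the settled range $T^2\le 0$. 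The base cases (rigid $T$, or a pencil of sections) are where I expect the real obstacle, and there I would instead invoke the Hodge index theorem: with $d:=L\cdot F\ge 0$ and $e:=T^2$, the Gram matrix of $(F,T,L)$ has at most one positive eigenvalue, while its upper-left block $\left(\begin{smallmatrix}0&1\\1&e\end{smallmatrix}\right)$ already has signature $(1,1)$; hence its determinant $-L^2+2d\,(L\cdot T)-e\,d^2$ is nonnegative, giving $2d\,(L\cdot T)\ge L^2+e\,d^2$. For $d>0$ and $e>0$ this forces $L\cdot T\ge 0$ as soon as $L^2\ge 0$, i.e.\ as soon as $L$ is pseudoeffective (a nef class pairs nonnegatively with a pseudoeffective one). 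Establishing this pseudoeffectivity—from the fact that $L$ is $\pi$-nef with $L\cdot F\ge 0$, with the degenerate case $d=0$ (which forces $g=0$, $\sum a_i=2$, and $L\equiv\pi'^*$ of a divisor on $\tilde C$ to be checked directly) treated separately—is the main obstacle; everything else is the formal reduction and the adjunction bookkeeping above.
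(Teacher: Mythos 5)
Your reduction to the case of a section is sound in outline and is close in spirit to what the paper does for horizontal curves of non-negative self-intersection (the paper perturbs $C$ by $\epsilon H$ and picks a general smooth member $D$ avoiding $\Sing(\C/B)$, then base-changes along $D\ra B$; you normalize $C$ directly). One secondary issue with your version: the section $T$ obtained by lifting the normalization of an arbitrary horizontal $C$ need not lie in $\C'\setminus\Sing(\C'/\tilde C)$ --- if $C$ passes through nodes of fibers and $\tilde C\ra B$ is ramified there, $\C'$ acquires $A_k$ singularities \emph{on} $T$, and the adjunction identity $\omega\cdot T=-T^2$ as you use it requires care. The paper sidesteps this precisely by choosing $D$ general in an ample system so that it misses the finitely many points of $\Sing(\C/B)$.

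The genuine gap is where you yourself locate it: the case of a section with $T^2>0$. Your Hodge-index inequality $2d\,(L\cdot T)\geq L^2+e\,d^2$ is correct, but it only yields $L\cdot T\geq 0$ once you know $L^2\geq 0$, and that inequality is not easier than the proposition --- it is essentially the statement that the divisor $B(a_1,\dots,a_n)=L^2$ of Theorem \ref{T:positive-divisors} is non-negative on the base, which the paper \emph{deduces from} the nefness of $L$. For $g\geq 1$ one can get $L^2\geq 0$ from nefness (resp.\ effectivity) of $\omega$, but in the genus $0$ case $\omega^2=\kappa=-\Delta_{nodal}$ can be arbitrarily negative and must be compensated by the section terms; proving that directly is the whole difficulty, so the argument is circular as it stands. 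Your fallback induction on $T^2$ through members of $|T|$ also stalls exactly where you say it does (rigid $T$, or members with no vertical part). The paper instead closes this case for $g=0$ by blowing down $(-1)$-curves not meeting $T$ to reach a $\PP^1$-bundle and doing an explicit computation with the unique negative section $E$, using the inequalities $\sum_{i=1}^n a_i\geq 2$ (forced by $\pi$-nefness on a genus $0$ fiber), $\Sigma^2\geq r=-E^2$ for non-negative sections, and the coincidence hypothesis applied both to the sections on $T$ and to those on $E$. That explicit step, or some substitute for it, is what your proposal is missing.
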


\begin{remark} The assumptions of the above proposition are weaker than the condition 
of $\A$-stability (see Section \ref{S:introduction}) in that  
they allow a collection of sections to intersect at finitely many points even 
when the sum of the associated weights is greater than $1$.
\end{remark} 
\begin{proof} 
The surface $\C$ has at-worst $A_k$ singularities, which are Du Val (the discrepancies of the canonical divisor are zero). It follows 
that the statement holds for $\C$ if and only if it holds for the minimal desingularization of $\C$. From now on, we assume that 
$\C$ is smooth. 

\begin{proof}[Proof in the case $g\geq 2$] Assume that $C\subset \C$ is an irreducible curve not contained in the fiber of $\pi$. 
Denote by $f\co \C \ra \C_\min$ the relative minimal model of $\C$ over $B$. We have $\omega_{\C/B}=f^*\omega_{\C_\min/B}+E$, 
where $E$ is an effective vertical divisor. By \cite[Theorem 6.33]{HM}, $\omega_{\C_\min/B}$ is nef on $\C_\min$. It follows that 
$\omega_{\C/B}\cdot C\geq 0$ and $-\Sigma_i^2=\omega_{\C/B}\cdot \Sigma_i\geq 0$. We
conclude that $L\cdot C\geq 0$. 
Together with $\pi$-nefness assumption this proves that $L$ is nef.
\end{proof}
\begin{proof}[Proof in the case $g=1$.] 
In this case, $\omega_{\C/B}$ is a linear combination of effective divisors supported in fibers (see \cite[Theorem 15, p. 176]{friedman}). Therefore, $-\Sigma_i^2=\omega_{\C/B}\cdot \Sigma_i\geq 0$. 
By the virtue of effectivity of $\omega_{\C/B}$ and $\pi$-nefness assumption, to prove that $L=\omega_{C/B}+\sum_{i=1}^n a_i\Sigma_i$ is nef, we need to show that 
$L\cdot \Sigma_i\geq 0$, for every $i=1,\dots, n$. 
Let $\Sigma_{j_1},\dots, \Sigma_{j_k}$ be all the sections coinciding with $\Sigma_i$. Then
\begin{align*}
L\cdot \Sigma_i &\geq  (\omega_{\C/B}+\sum_{\ell=1}^k a_{j_\ell}\Sigma_{j_\ell})\cdot \Sigma_i \\
&= -\Sigma^2_i (1-\sum_{\ell=1}^k a_{j_\ell}) \geq 0.
\end{align*}
\renewcommand\qedsymbol{\empty}
\end{proof}
\begin{proof}[Proof in the case $g=0$.] 

We need to show that $C\cdot L\geq 0$ for every irreducible curve $C$ on $\C$. When $C$ lies in the fiber, $C\cdot L\geq 0$
by assumption. Next, the proof breaks into two parts: in the first, we deal with the case when $C$ is a section; in the second, we show
that a general case reduces to the former.

First, suppose that $C$ is a section. By successively blowing-down $(-1)$\nb-curves not meeting $C$, we reduce to the 
case when $\C\ra B$ is a $\PP^1$\nb-bundle. We consider two cases: $C^2<0$ and $C^2\geq 0$.

{\em Case \rm{I}: $C$ is a negative section of $\C\ra B$.} Without loss of generality, we can assume that sections 
$\Sigma_1,\dots,\Sigma_k$
coincide with $C$. Then compute 
\begin{align*}
C\cdot L&=C\cdot (\omega+\sum_{i=1}^n a_i\Sigma_i) \\
&=-C^2+\sum_{i=1}^k a_iC^2+\sum_{i=k+1}^n C\cdot a_i\Sigma_i \\
&=-C^2(1-\sum_{i=1}^k a_i)+\sum_{i=k+1}^n a_i(C\cdot \Sigma_i) \geq 0,
\end{align*}
since $\sum_{i=1}^k a_i\leq 1$ by assumption.

{\em Case \rm{II}: $C$ is a non-negative sections of $\C\ra B$.} To make the computation more transparent, let 
$\{\Sigma_i\}_{i=1}^k$, $\{\Sigma_i\}_{i=k+1}^\ell$ and $\{\Sigma_i\}_{i=\ell+1}^n$ be the sections 
that, respectively, coincide with $C$, have negative self-intersection and are neither of the first two. 
A $\PP^1$-bundle $\C\ra B$
has a unique section of negative self-intersection, say, $E^2=-r$. Therefore, $\Sigma_i=E$  for all $i=k+1,\dots,\ell$.
By our assumption, we have
\begin{align*}
\sum_{i=1}^k a_i &\leq 1, \\
\sum_{i=k+1}^\ell a_i &\leq 1, \\
\sum_{i=1}^n a_i &\geq 2.
\end{align*}
A non-negative section $\Sigma$ on $\C$ satisfies $\Sigma^{2}\geq r$. 
Also, since $(C-\Sigma_i)^2=0$, we have 
\[
C\cdot \Sigma_i=\frac{1}{2}(C^2+\Sigma_i^2), \quad i=1,\dots, n.
\]
Now compute
\begin{align*}
C\cdot L&=C\cdot (\omega+\sum_{i=1}^n a_i\Sigma_i) \\
&=-C^2+\sum_{i=1}^n (C\cdot a_i\Sigma_i) \\
&=C^2(\sum_{i=1}^k a_i+\sum_{i=k+1}^n \frac{a_i}{2} -1)+\frac{1}{2}(-r\sum_{i=k+1}^\ell a_i+\sum_{i=\ell+1}^n a_i\Sigma^2_i) \\
&\geq r(\sum_{i=1}^k a_i +\sum_{i=\ell+1}^n a_i-1) >0.
\end{align*}
 
Finally, we consider a general case when $C$ is an arbitrary irreducible curve on $\C$ that does not lie in the fiber of $\pi$. 


Suppose first that $C^2< 0$. The intersection number $\omega\cdot C$ equals to $-C^{2}$ plus the number 
of branch points of $\tilde{C}\ra B$, where $\tilde{C}$ is the normalization of $C$. Hence $\omega\cdot C>0$. If $C\neq \Sigma_i$, then clearly $C\cdot L>0$.
The case $C=\Sigma_i$ is already considered.

Suppose now that $C^2\geq 0$. We will proceed to prove by contradiction that $C\cdot L\geq 0$.
If $C\cdot L<0$, then for an arbitrary ample divisor $H$ on $\C$, we can find 
$0<\epsilon\ll 1$ such that $(C+\epsilon H)\cdot L<0$. Since $C$ is a nef divisor, $C+\epsilon H$ is ample by the
Kleiman's criterion \cite{kl}. In particular, we can find a smooth curve $D\subset \C$ whose numerical class is a multiple of $C+\epsilon H$ 
and such that $D$ avoids (finitely many points) $\Sing(\C/B)$. Consider the induced map $D\ra B$ and the fiber product
$\C':=\C\times_B D$. 
\begin{align*}
\xymatrix{\C'=\C\times_B D \ar[d] \ar[r] & \C\ar[d]^{\pi}  \\
D\ar[r] \ar@_{^{`}->}[ur] \ar@/^1pc/[u]^{\tau} & B 
}
\end{align*} 
The projection $\C'\ra D$ has a section $\tau\co D \ra \C'$, whose image in $\C'$ 
maps onto $D$ in $\C$. Let $\Sigma_i'$ be the preimage of $\Sigma_i$ on $\C$. Introduce the line bundle
\[
L'=\omega_{\C'/D}+\sum_{i=1}^n a_i\Sigma'_i.
\]
Then $L'$ is the pullback of $L$ from $\C$. Moreover, $\tau(D)\cdot L'=D\cdot L$, by the projection formula. 
Noting that  the surface $\C'$ and the line bundle $L'$ satisfy the assumptions of the proposition, we must have $\tau(D)\cdot L'\geq 0$, by the first part of the argument. A contradiction.
\end{proof}
\renewcommand{\qedsymbol}{\empty}
\end{proof}

\section{Nef divisors on $\Mg{g,\A}$}
\subsection{Tautological divisors}
\label{S:divisors}
Let $\A=\bigl(a_1,\dots, a_n\bigr)$ be a weight vector with $a_i\in (0,1]\cap \QQ$ and
$2g-2+\sum\limits_{i=1}^n a_i>2$. 
Consider the Hassett's moduli space $\Mg{g,\A}$ parameterizing $\A$\nb-stable pointed genus $g$ curves \cite{Hassett-weighted}. 
It is a smooth Deligne-Mumford stack and carries a universal family 
$(\pi\co \C\ra \Mg{g,\A}, \sigma_1,\dots, \sigma_n)$. We consider the following 
{\em tautological divisor classes} on the stack $\Mg{g,\A}$:
\begin{enumerate}
\item 
The {\em Hodge} class $\lambda=c_1(\pi_*\omega_{\C/B})$.
\item 
The {\em kappa} class $\kappa=\pi_*(c^2_1(\omega_{C/B}))$ (this is different from $\kappa_1$ of \cite{AC}).
\item 
The {\em psi}-classes $\psi_i=\pi_*(-\sigma_i^2)$; the total {\em psi}-class is $\psi:=\sum\limits_{i=1}^n \psi_i$.
\item The boundary divisors $\Delta_{i,j}:=\pi_*(\sigma_i\cdot \sigma_j)$. These are denoted $D_{I,J}(\A)$, where 
$I=\{i,j\}$ and $J=\{1,\dots, n\}\setminus I$, in \cite{Hassett-weighted}. 
The sum of all $\Delta_{i,j}$ is denoted by $\Delta_s$.
\item The sum, denoted $\Delta_{nodal}$, of the boundary divisors parameterizing nodal curves. This is denoted by $\nu$ in {\em loc.cit.}
\end{enumerate}
The Mumford's relation gives $\kappa=12\lambda-\Delta_{nodal}$ (cf. \cite{AC}). In the 
case of $\Mg{g,n}$, we use the usual notation $\Delta$ for the total boundary. Note for the future that under the natural reduction morphism $\Mg{g,n} \ra \Mg{g,\A}$, the divisor $\Delta$ in $\Mg{g,n}$ pushes forward to $\Delta_s+\Delta_{nodal}$. 

In the case of $\Mg{g,n}$, we also consider boundary divisors $\Delta_S$, 
for every subset $S\subset \{1,\dots, n\}$ satisfying
$|S|\geq 2$ (and in the case of $g=0$, $n-|S|\geq 2$). The generic point of $\Delta_S$ is a reducible curve with a single node and an irreducible component of genus $0$ marked
by sections $\{\sigma_i\}_{i\in S}$.

\subsection{Main result} 
\begin{theorem}\label{T:positive-divisors}
The following divisors 
\begin{align*}
A &=A(a_{1},\dots,a_{n})= \kappa+\psi+\sum_{i<j}(a_i+a_j)\Delta_{ij},  \\
B &=B(a_{1},\dots,a_{n}) = \kappa+\sum_{i=1}^n (2a_i-a_i^2)\psi_i+\sum_{i<j}(2a_ia_j) \Delta_{ij},\\
C_i &=C_i(a_{1},\dots,a_{n}) = (1-a_{i})\psi_{i}+\sum_{j\neq i}a_j\Delta_{ij}, \quad \text{for each $i=1,\dots, n$,}
\end{align*}
are nef on $\Mg{g,\A}$.
 \end{theorem}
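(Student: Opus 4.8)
The plan is to reduce the nefness of the divisors $A$, $B$, and $C_i$ on $\Mg{g,\A}$ to the main positivity result of Proposition \ref{P:relatively-nef}, by realizing each as a pushforward of an intersection-theoretic expression built out of $\omega$, the sections $\Sigma_i$, and the relatively nef line bundle $L=\omega+\sum_{i=1}^n a_i\Sigma_i$. Since the Hodge-type inequalities on a moduli stack are tested against complete curves, I would first fix an arbitrary complete curve $B \to \Mg{g,\A}$ and pull back the universal family to obtain a family $\pi\co \C\ra B$ of $\A$-stable curves with sections $\Sigma_i$ satisfying exactly the hypotheses of Proposition \ref{P:relatively-nef} (the $\A$-stability condition (3) guarantees that any coinciding sections have total weight at most $1$). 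On such a $B$, the class $A$ evaluates to $\pi_*\bigl(L^2\bigr)$ up to identifying $\psi_i=\pi_*(-\Sigma_i^2)$, $\kappa=\pi_*(\omega^2)$, and $\Delta_{ij}=\pi_*(\Sigma_i\cdot\Sigma_j)$; expanding $L^2=\omega^2+2\sum a_i\,\omega\cdot\Sigma_i+\sum_{i,j}a_ia_j\,\Sigma_i\cdot\Sigma_j$ and using the adjunction relation $\omega\cdot\Sigma_i=-\Sigma_i^2$ should recover the stated coefficients. Thus the first and central step is to verify the three bookkeeping identities
\begin{align*}
A&=\pi_*(L^2), \\
B&=\pi_*(\omega\cdot L)+(\text{correction terms}), \\
C_i&=\pi_*(\Sigma_i\cdot L),
\end{align*}
i.e.\ to match each tautological expression with a pushforward of a product of $L$ with a nef or effective cycle.

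Once these identities are in place, nefness follows formally: the key inputs are that $L$ is nef on the total space $\C$ (Proposition \ref{P:relatively-nef}), that $\omega$ restricted to fibers has nonnegative degree in the relevant range, and that the sections $\Sigma_i$ are effective curves. For $C_i=\pi_*(\Sigma_i\cdot L)$, I would compute $\Sigma_i\cdot L$ directly: since $\Sigma_i$ is a section it meets each fiber once, and $\Sigma_i\cdot L=\Sigma_i\cdot\omega+\sum_j a_j\,\Sigma_i\cdot\Sigma_j=-\Sigma_i^2+\sum_j a_j\,\Sigma_i\cdot\Sigma_j$, which upon pushforward gives $(1-a_i)\psi_i+\sum_{j\neq i}a_j\Delta_{ij}$; nonnegativity of this degree is exactly the $g=0$ section computation already carried out in the proof of Proposition \ref{P:relatively-nef} (Cases I and II), now read on an arbitrary $B$. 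For $A=\pi_*(L^2)$, nefness is immediate: the pushforward of the self-intersection of a nef line bundle on a surface fibered over a curve is nonnegative, because $L^2$ can be computed after restricting to the general fiber and adding a nonnegative vertical contribution, all of which follow from $L$ being nef. The class $B$ is the subtlest to set up; I expect it to arise as $\pi_*(\omega\cdot L)$ or as a symmetric combination $\pi_*\bigl((\omega+\sum a_i\Sigma_i)(\cdots)\bigr)$ whose cross terms produce the $(2a_i-a_i^2)$ and $2a_ia_j$ coefficients after applying adjunction.

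The main obstacle will be pinning down the precise cycle-theoretic expression whose pushforward yields $B$ and confirming that this expression is a product of two nef (or one nef and one effective) classes, so that the Koll\'ar-type positivity of $L$ on $\C$ applies. The coefficient $2a_i-a_i^2=1-(1-a_i)^2$ is suggestive: it hints that $B$ equals $\pi_*(L^2)$ minus $\sum_i(1-a_i)^2\psi_i$ plus lower-order boundary corrections, or equivalently that $B$ is obtained by intersecting $L$ with $\omega+\sum_i a_i(2-a_i)\Sigma_i/(\cdots)$—so the bookkeeping must be done carefully to avoid sign or factor-of-two errors. I would therefore devote the bulk of the argument to this algebraic identity, after which positivity is a one-line consequence of Proposition \ref{P:relatively-nef} together with the nefness of $\omega$ on the general fiber and the effectivity of each $\Sigma_i$. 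Finally, I would invoke the passage from $g=0$ to arbitrary $g$ mentioned in the introduction (via a known positivity result on $\Mg{g}$) to conclude in full generality, noting that the fiberwise computations underlying all three identities are insensitive to the genus once $L$ is known to be nef on $\C$.
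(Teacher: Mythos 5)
Your overall strategy is the paper's: test the divisors on curves $B\to\Mg{g,\A}$, invoke Proposition \ref{P:relatively-nef} to get $L=\omega+\sum_i a_i\Sigma_i$ nef on the total space $\C$, and express each class as a pushforward of $L$ intersected with a nef or effective cycle. But the central bookkeeping identities --- which you yourself flag as the crux and promise to devote the bulk of the argument to --- are wrong for two of the three divisors. The identity $A=\pi_*(L^2)$ is false. Expanding as you indicate and using $\omega\cdot\Sigma_i=-\Sigma_i^2$ gives
\begin{equation*}
\pi_*(L^2)=\kappa+\sum_{i=1}^n(2a_i-a_i^2)\psi_i+\sum_{i<j}2a_ia_j\Delta_{ij}=B,
\end{equation*}
with no correction terms whatsoever; so the class you found ``subtlest to set up'' is exactly the self-intersection, while your guess $\pi_*(\omega\cdot L)=\kappa+\sum_i a_i\psi_i$ is not $B$. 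The class $A$ is instead $\pi_*\bigl((\omega+\sum_i\Sigma_i)\cdot L\bigr)$, and its nonnegativity requires the additional observation (absent from your proposal) that $\omega+\sum_i\Sigma_i=L+\sum_i(1-a_i)\Sigma_i$ is an effective combination of the nef class $L$ and the effective curves $\Sigma_i$, hence pseudoeffective, so its product with the nef class $L$ is $\geq 0$. Your identity for $C_i=\pi_*(\Sigma_i\cdot L)$ is correct, and once $L$ is nef its nonnegativity is immediate from $\Sigma_i$ being a curve on $\C$; there is no need to re-run Cases I and II of the Proposition.

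Two smaller points. First, you test against an arbitrary complete curve $B\to\Mg{g,\A}$, but Proposition \ref{P:relatively-nef} applies to generically smooth families with irreducible total space; the paper reduces to that case via functoriality of the divisors with respect to the boundary stratification (normalizing a generically reducible family and assigning weight $1$ to the conductor sections). Some such reduction is needed. Second, your closing appeal to a passage from $g=0$ to arbitrary $g$ is superfluous here: that reduction lives inside the proof of Proposition \ref{P:relatively-nef} itself, and once that proposition is granted the intersection-theoretic argument above is uniform in $g$.
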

 \begin{proof}
Adopt the convention that for a generically singular family $\C\ra B$, 
the sections $B\ra \C$ associated to the conductor have weight $1$. 
Then the divisors under consideration are functorial with respect to the boundary stratification (cf. \cite{FS}). Therefore, it suffices to check that they
have non-negative degree on every curve $\pi\co (\C; \sigma_1,\dots, \sigma_n)\ra B$ with an irreducible total space $\C$.

 First, by Proposition \ref{P:relatively-nef}, the divisor 
 \[
 L:=\omega+\sum_{i=1}^n a_i\sigma_i
 \]
 is nef on $\C$. In particular, it is pseudoeffective and has a non-negative 
 self-intersection. We will show that the intersection 
 numbers 
 $A$, $B$ and $C_i$ are non-negative by expressing each of them as an intersection of $L$ 
 with a pseudoeffective class on $\C$.
 
 For $A$, we note that $\omega+\sum_{i=1}^n\sigma_i$ is an effective combination of $L$ and $\sigma_i$, $1\leq i\leq n$. Therefore,
\[
0\leq (\omega+\sum_{i=1}^n \sigma_i)\cdot L=\kappa+\psi+\sum_{i<j}(a_i+a_j)\Delta_{ij}=A.
\]
 
 For $B$, we have
 \[
 0\leq L^2=(\omega+\sum_{i=1}^n a_i\sigma_i)^2=\kappa+\sum_{i=1}^n (2a_i-a_i^2)\psi_i+\sum_{i<j}(2a_ia_j) \Delta_{ij}=B. 
 \]

For $C_i$, we have
\[
0\leq L\cdot \sigma_{i}=(1-a_{i})\psi_{i}+\sum_{j\neq i}a_j\Delta_{ij}=C_i.
\]
\end{proof}

\subsection{Ampleness result}
With a bit more work, we can prove that the divisor $A$ is ample. We begin with a few preliminaries.
\begin{definition}
\label{D:replacement}
 Let $\A=(a_1,\dots, a_n)$ be a weight vector. Suppose that $a_1=\sum_{\ell=1}^k b_{\ell}$.
Set $\B=(b_1,\dots,b_{k}, b_{k+1},\dots, b_{n+k-1})$, where $b_{k+i}=a_{i+1}$ for $i\geq 1$. 
\par
A {\em replacement morphism} $\chi\co \Mg{g,\A} \ra \Mg{g,\B}$ is a morphism which sends an $\A$-stable curve $(\C\ra B; \sigma_1,\dots,
\sigma_n)$ to a $\B$-stable curve $(\C\ra B; \tau_{1},\dots, \tau_{k}, \sigma_2,\dots, \sigma_{n})$, where the sections 
$\{\tau_i\}_{1\leq i \leq k}$ are equal to the section $\sigma_1$.
\end{definition}

\begin{lemma}[Pull-back formulae for $\chi$]\label{L:replacement} Keep the notation of Definition \ref{D:replacement}.
Under the replacement morphism $\chi:\Mg{g,\A} \ra \Mg{g,\B}$ the divisor $A(b_1,\dots, b_{n+k-1})$ pulls back according to the following formula
\begin{align}
 \chi^*\left(A(b_1,\dots, b_{n+k-1})\right)=A(a_1,\dots, a_n)+(1-a_1)\psi_1.
\end{align}
\end{lemma}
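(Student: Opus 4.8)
The plan is to compute how each tautological class appearing in $A$ transforms under the replacement morphism $\chi$, then assemble these into the stated pull-back formula. The key observation is that $\chi$ acts on the universal family by leaving the curve $\C \to B$ unchanged while replacing the single section $\sigma_1$ (of weight $a_1$) by $k$ coincident sections $\tau_1,\dots,\tau_k$ (of weights $b_1,\dots,b_k$ summing to $a_1$). Since the underlying family and its relative dualizing sheaf are untouched, the classes $\kappa$ and $\lambda$ pull back to themselves. The content of the lemma is therefore entirely about how the $\psi$ and $\Delta$ contributions reorganize.

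First I would record the behavior of the psi-classes. On $\Mg{g,\B}$ we have classes $\psi_{\tau_1},\dots,\psi_{\tau_k}$ attached to the new sections together with $\psi_2,\dots,\psi_n$ attached to the unchanged sections. Because the $\tau_\ell$ all equal $\sigma_1$ as subschemes of $\C$, each self-intersection $\tau_\ell^2$ pulls back to $\sigma_1^2$, so $\chi^*\psi_{\tau_\ell}=\psi_1$ for every $\ell$, while $\chi^*\psi_j=\psi_j$ for $j\geq 2$. Next I would treat the boundary classes $\Delta_{ij}$. The genuinely new feature is that pairs of the coincident sections $\tau_\ell,\tau_m$ (with $1\leq \ell<m\leq k$) contribute boundary terms $\Delta_{\tau_\ell\tau_m}=\pi_*(\tau_\ell\cdot\tau_m)$; since $\tau_\ell=\tau_m=\sigma_1$, each such intersection pulls back to $\pi_*(\sigma_1^2)=-\psi_1$. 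The cross terms $\Delta_{\tau_\ell,j}$ for $j\geq 2$ pull back to $\Delta_{1j}$, and the terms $\Delta_{ij}$ among unchanged sections pull back to themselves.

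Then I would substitute these into the definition $A(b_1,\dots,b_{n+k-1})=\kappa+\psi+\sum_{p<q}(b_p+b_q)\Delta_{pq}$ and collect terms. The psi-contribution becomes $\kappa+\sum_{\ell=1}^k \psi_1+\sum_{j\geq 2}\psi_j = \kappa + k\,\psi_1+\sum_{j\geq 2}\psi_j$, which exceeds the $A(a_1,\dots,a_n)$ psi-contribution $\kappa+\psi$ by $(k-1)\psi_1$. The boundary sum splits into three pieces: the coincident pairs contribute $\sum_{1\leq \ell<m\leq k}(b_\ell+b_m)(-\psi_1)$; the cross terms contribute $\sum_{\ell=1}^k\sum_{j\geq 2} (b_\ell+a_j)\Delta_{1j}$; and the unchanged pairs reproduce $\sum_{2\leq i<j}(a_i+a_j)\Delta_{ij}$. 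Using $\sum_{\ell=1}^k b_\ell=a_1$ and $\sum_{1\leq\ell<m\leq k}(b_\ell+b_m)=(k-1)a_1$, the coincident-pair piece is $-(k-1)a_1\psi_1$, and the cross-term coefficient of $\Delta_{1j}$ collapses to $(a_1+k\,a_j)$.

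The main obstacle, and the step demanding care, is reconciling this $\Delta_{1j}$ coefficient with the target: $A(a_1,\dots,a_n)$ contributes $(a_1+a_j)\Delta_{1j}$, so I must account for the discrepancy $(k-1)a_j\Delta_{1j}$ summed over $j\geq 2$, together with the leftover $(k-1)\psi_1-(k-1)a_1\psi_1=(k-1)(1-a_1)\psi_1$ from the psi- and coincident-pair terms. The resolution is the identity $\psi_1=\pi_*(\sigma_1\cdot(\omega+\sum_i a_i\sigma_i))-\ldots$; more usefully, I would invoke the relation expressing $\psi_1$ via $C_1$-type intersections, namely that on $\Mg{g,\A}$ one has $\psi_1+\sum_{j\geq 2}a_j\Delta_{1j}$ equal to a combination forcing the spurious $(k-1)$-multiples to telescope into exactly $(1-a_1)\psi_1$. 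Verifying that these two residual contributions cancel against each other and leave precisely $(1-a_1)\psi_1$ — rather than a stray multiple of it or an uncancelled boundary term — is the crux; everything else is bookkeeping. I would double-check this cancellation in the base case $k=2$ before trusting the general collection of coefficients.
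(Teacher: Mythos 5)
Your class-by-class pullback computations are all correct, and your route is necessarily different from the paper's, since the paper offers no computation at all and simply defers to a generalization of \cite[Lemma 2.9]{FS}. The genuine gap is your final step. Carrying your own intermediate formulas to the end gives
\[
\chi^*\bigl(A(b_1,\dots,b_{n+k-1})\bigr)=A(a_1,\dots,a_n)+(k-1)\Bigl[(1-a_1)\psi_1+\sum_{j=2}^{n}a_j\Delta_{1j}\Bigr],
\]
that is, the discrepancy is $(k-1)\,C_1(a_1,\dots,a_n)$ in the notation of Theorem \ref{T:positive-divisors}. The ``identity'' you invoke to make the residual terms telescope to $(1-a_1)\psi_1$ does not exist: on $\Mg{g,\A}$ the classes $\psi_1$ and the $\Delta_{1j}$ are independent in general, and $\Delta_{1j}$ is a nonzero effective divisor whenever $a_1+a_j\leq 1$ and $\dim\Mg{g,\A}\geq 1$. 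Your own proposed sanity check at $k=2$ already refutes the cancellation, since for $k=2$ it would force $\sum_{j\geq 2}a_j\Delta_{1j}=0$. Concretely, for $g=0$, $\A=(1/2,1/4,1,1,1,1)$ and $b_1=b_2=1/4$, one finds $\chi^*A(\B)-A(\A)=\tfrac12\psi_1+\tfrac14\Delta_{12}$, not $\tfrac12\psi_1$. (Note also that even when all $\Delta_{1j}$ vanish, the coefficient of $\psi_1$ your computation produces is $(k-1)(1-a_1)$, not $(1-a_1)$.)

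What your bookkeeping actually reveals is that the lemma as literally stated needs amending, and that your computation is the correct proof of the amended statement $\chi^*A(\B)=A(\A)+(k-1)C_1(\A)$. Rather than hunting for a cancellation, you should stop there and observe that the correction is harmless for the intended application: in the proof of Proposition \ref{P:ample-divisors} one only needs $\chi^*(A+P)$ to be an ample class plus a class with non-negative degree on the test curve $B$, positive when $\sigma_1^2<0$; the class $(k-1)C_1=(k-1)\,\pi_*\bigl(\sigma_1\cdot(\omega+\textstyle\sum_i a_i\sigma_i)\bigr)$ has exactly this property by the nefness of $\omega+\sum_i a_i\sigma_i$. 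So the fix is to replace the target identity by the one your computation proves, not to manufacture a relation that makes the stated identity hold.
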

\begin{proof} This follows from a straightforward generalization of \cite[Lemma 2.9]{FS}. The proof is analogous and so is omitted.
\end{proof}

\begin{lemma}
\label{L:effectivity} The divisor $\kappa+\psi$ is a positive linear combination of $\Delta_s$ and all boundary divisors on the space of weighted
pointed rational curves $\Mg{0,\A}$.
\end{lemma}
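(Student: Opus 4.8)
The plan is to reduce the statement to the vanishing of the Hodge class in genus zero together with a positive boundary expansion of the total psi-class. Since every fiber has arithmetic genus $0$, we have $\pi_*\omega_{\C/B}=0$ and hence $\lambda=0$ on $\Mg{0,\A}$; Mumford's relation $\kappa=12\lambda-\Delta_{nodal}$ then gives $\kappa=-\Delta_{nodal}$. Consequently $\kappa+\psi=\psi-\Delta_{nodal}$, so the lemma becomes the assertion that $\psi-\Delta_{nodal}$ is a combination of the boundary divisors of $\Mg{0,\A}$ in which every coefficient is strictly positive. As $\Pic(\Mg{0,\A})_{\QQ}$ is generated by boundary classes, I would write $\psi=\sum_D c_D\,D$, the sum running over the collision divisors $\Delta_{ij}$ (whose generic member is an irreducible curve with $p_i=p_j$) and the nodal divisors $D_{I,J}$, and then check positivity coefficient by coefficient.

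The coefficients $c_D$ can be read off by intersecting $\psi$ with a complete one-parameter family $\gamma_D$ transverse to $D$, using that the pairing between boundary divisors and the one-dimensional boundary strata of $\Mg{0,\A}$ is explicit. For the nodal divisors this is the classical genus-zero computation, which I would import through the reduction morphism $\rho\co\Mg{0,n}\ra\Mg{0,\A}$: on $\Mg{0,n}$ one has $\kappa+\psi=\kappa_1$ (since $\Delta_s=0$ there), and $\kappa_1$ carries the well-known strictly positive boundary expansion $\kappa_1=\sum_{S}\tfrac{(|S|-1)(|S^{c}|-1)}{n-1}\,\delta_S$. Descending along $\rho$, which is an isomorphism away from the locus it contracts and which collapses precisely the divisors $\delta_S$ with $\sum_{i\in S}a_i\le 1$, should yield the corresponding positive expansion on $\Mg{0,\A}$, after accounting for the boundary corrections in $\rho^*\psi_i$.

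Combining these inputs, on a nodal divisor $D_{I,J}$ the class $\psi$ acquires a coefficient of the form $1+\tfrac{(|I|-1)(|J|-1)}{n-1}$ (or its weighted analogue), so the subtraction of $\Delta_{nodal}$, which removes exactly $1$ there, leaves a strictly positive coefficient; on a collision divisor $\Delta_{ij}$ the generic curve is smooth, so $\Delta_{nodal}$ contributes nothing and one only needs the coefficient of $\psi$ along $\Delta_{ij}$ to be positive.

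The hard part will be this last point: controlling the coefficients along the collision divisors $\Delta_{ij}$ and verifying the strict inequality $c_{D_{I,J}}>1$ on every nodal divisor. Along the collision divisors the generic curve is irreducible, so the usual $\Mg{0,n}$ boundary combinatorics does not apply directly, and the behavior of $\psi_i$ and $\psi_j$ as $p_i$ and $p_j$ collide must be analyzed through $\rho$, keeping careful track of which $\delta_S$ are contracted and of the correction terms in $\rho^*\psi_i$. Once these bookkeeping inequalities are established, strict positivity of every boundary coefficient of $\kappa+\psi=\psi-\Delta_{nodal}$ follows, proving the lemma.
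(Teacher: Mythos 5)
Your overall route coincides with the paper's: both rest on the identity $\kappa+\psi=\kappa_1=\sum_{S}\frac{(|S|-1)(|S^c|-1)}{n-1}\Delta_S$ on $\Mg{0,n}$ (the paper's \eqref{E:positivity}, written there as $\sum_{r=2}^{\lfloor n/2\rfloor}\frac{r(n-r)-n+1}{n-1}\Delta_r$; note $r(n-r)-n+1=(r-1)(n-r-1)>0$ for $2\le r\le n-2$ and $n\ge 4$), transported to $\Mg{0,\A}$ along the reduction morphism. The genuine gap is that you stop exactly where the work is: the positivity of the coefficient of $\psi-\Delta_{nodal}$ along the collision divisors $\Delta_{ij}$ and the inequality $c_{D_{I,J}}>1$ on the nodal divisors are deferred as ``bookkeeping inequalities to be established,'' and the route you propose for establishing them --- expanding $\rho^*\psi_i$ and tracking correction terms divisor by divisor as $p_i$ and $p_j$ collide --- is left unexecuted and is more delicate than the statement requires.

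The gap closes at once if you transport the identity in the other direction, which is what the paper does: apply $f_*$ for the reduction morphism $f\colon\Mg{0,n}\ra\Mg{0,\A}$ to the displayed relation instead of pulling $\psi$ back. Every $\Delta_S$ with $|S|\ge 3$ and $\sum_{i\in S}a_i\le 1$ is contracted to higher codimension and contributes nothing; every $\Delta_{\{i,j\}}$ with $a_i+a_j\le 1$ maps birationally onto the collision divisor $\Delta_{ij}\subset\Delta_s$ and carries its coefficient $\frac{n-3}{n-1}>0$ with it; every remaining $\Delta_S$ maps to a boundary divisor of $\Mg{0,\A}$ with coefficient $\frac{(|S|-1)(|S^c|-1)}{n-1}>0$; and $f_*(\kappa+\psi)=\kappa+\psi$ because the tautological classes on both sides are defined compatibly from the universal families. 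This produces the asserted positive combination in one step, with no need for the inequality $c_{D_{I,J}}>1$ or for a separate analysis of the coefficients of $\psi$ along the collision divisors.
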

\begin{proof}
On $\Mg{0,n}$, we have $\kappa=-\Delta$ and a well-known relation 
\begin{align}
\label{E:positivity}
\kappa+\psi=\sum_{r=2}^{\lfloor n/2 \rfloor} \frac{r(n-r)-n+1}{n-1}\Delta_r.
\end{align}
This is a positive combination of $\Delta_2$ and all other boundary divisors for $n\geq 4$. The general case follows by pushing forward the relation 
\eqref{E:positivity} to $\Mg{0,\A}$ via the reduction morphism.
\end{proof}

\begin{prop}
\label{P:ample-divisors} 
The divisor $A=A(a_{1},\dots,a_{n})=\kappa+\psi+\sum_{i<j}(a_i+a_j)\Delta_{ij}$  on $\Mg{g,\A}$ is a pullback of an ample divisor on the coarse moduli space.
\end{prop}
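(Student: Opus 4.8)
The plan is to upgrade the nefness of $A$ from Theorem \ref{T:positive-divisors} to ampleness via the Nakai--Moishezon--Kleiman criterion. Since the coarse moduli map $p\co \Mg{g,\A}\ra M_{g,\A}$ is finite and $A$ is a combination of tautological classes, it descends to a $\QQ$-divisor $\bar A$ on the projective variety $M_{g,\A}$ (in genus $0$ there is nothing to check, as $\Mg{0,\A}$ is itself a variety). As $A$ is nef, $\bar A$ will be ample once we show $A^{\dim Z}\cdot Z>0$ for every positive-dimensional subvariety $Z\subset \Mg{g,\A}$. Following the introduction, I would first settle $g=0$ and then reduce the positive genus case to it using the positivity of tautological classes on $\Mg{g}$.

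For $g=0$ the crucial point is that $A$ is supported on the whole boundary. Indeed, by Lemma \ref{L:effectivity} the class $\kappa+\psi$ is a positive combination of $\Delta_s=\sum_{i<j}\Delta_{ij}$ and of all the nodal boundary divisors; adding $\sum_{i<j}(a_i+a_j)\Delta_{ij}$, whose coefficients are positive since the $a_i$ are, exhibits $A$ as a positive combination of \emph{every} boundary divisor of $\Mg{0,\A}$. Hence $\Supp(A)$ is the complement of the locus of smooth curves with distinct markings, i.e.\ of the affine variety $\M_{0,n}$. Now take a subvariety $Z$ not contained in the boundary: then $Z\cap \M_{0,n}$ is a nonempty affine open of $Z$, so the restriction $A|_Z$ is a nef effective divisor whose support has affine complement, and such a divisor is big (the support of an effective divisor with affine complement carries positivity, cf.\ Goodman's theorem). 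Thus $A^{\dim Z}\cdot Z>0$ for all such $Z$, and in particular for $Z=\Mg{0,\A}$.

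It remains to treat $Z$ contained in the boundary, which I would handle by induction on $\dim \Mg{0,\A}=n-3$, the base case $n=3$ being trivial. If $Z$ lies in a collision divisor $\Delta_{ij}$, then $Z$ is the image under the replacement morphism $\chi$ of a subvariety $Z'$ in the Hassett space $\Mg{0,\A'}$ obtained by merging $i,j$ into one point of weight $a_i+a_j\le 1$; by Lemma \ref{L:replacement}, $\chi^*A=A(\A')+(1-(a_i+a_j))\psi$. Here $A(\A')$ is ample by the inductive hypothesis (one fewer point) while $(1-(a_i+a_j))\psi$ is nef, since $1-(a_i+a_j)\ge 0$ and the $\psi$-classes are nef; hence $A|_Z$ is the restriction of a big and nef divisor and is itself big. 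If instead $Z$ lies in a divisor parameterizing nodal curves, then $Z$ sits in a stratum that is (a finite quotient of) a product of lower-dimensional Hassett spaces, and by the functoriality of $A$ with respect to the boundary stratification used in Theorem \ref{T:positive-divisors}, $A$ restricts there to a sum of pullbacks of the ample classes $A(\cdot)$ on the factors, again big. This closes the induction.

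For $g\ge 1$ I would run the same scheme relative to the natural map $\Mg{g,\A}\ra \Mg{g}$ (to $\Mg{1,1}$ when $g=1$), writing $A$ as the pullback of an ample tautological class on $\Mg{g}$ plus a class that is positive in the fiber directions by the genus $0$ analysis; this is the well-known positivity invoked in the introduction. Having verified $A^{\dim Z}\cdot Z>0$ for all $Z$, nefness of $A$ together with finiteness of $p$ yields ampleness of $\bar A$. The main obstacle is the boundary induction: one must confirm that the restriction of $A$ to each stratum is exactly the ample class predicted by functoriality (tracking the convention that conductor/node sections carry weight $1$) and that the correction term in the replacement formula is nef, so that the inductive hypothesis applies uniformly to both the collision divisors and the nodal strata; a secondary point needing justification is the passage from ``nef effective with affine complement'' to ``big''.
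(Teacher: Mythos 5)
Your overall strategy is genuinely different from the paper's: you go for Nakai--Moishezon directly, whereas the paper shows that an entire neighborhood of $A$ in $\NS(\Mg{g,\A})\otimes\QQ$ is nef by testing perturbed divisors $A+P$ on curves and then invoking Kleiman. Your treatment of subvarieties $Z$ meeting the interior is correct and attractive: $A$ is indeed an effective class supported on the whole boundary (Lemma \ref{L:effectivity} plus the terms $(a_i+a_j)\Delta_{ij}$), the complement $\M_{0,n}$ is affine, and an effective divisor whose support has dense affine complement is big, because the section ring of its reduction contains $\O(U)$ and hence has maximal transcendence degree (your appeal to Goodman is not quite the right citation, but the fact is standard). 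The problem is the boundary induction, which breaks at exactly the point the paper works hardest to get around.

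The gap is the assertion that $\psi_\tau$ is nef on $\Mg{0,\A'}$, so that $\chi^*A=A(\A')+(1-(a_i+a_j))\psi_\tau$ is ``ample plus nef.'' Psi classes on Hassett spaces, i.e.\ $\pi_*(-\sigma_\tau^2)$ for the Hassett universal family, are \emph{not} nef. For instance, take $\A'=(2/3,1/3,1/3,1/3,1/3,1/3,1/3)$ (obtained by merging two points of weight $1/3$), and the family $\C=\PP^1\times\PP^1\to B=\PP^1$ with $\sigma_\tau$ the diagonal and the six light sections distinct constant sections: this is $\A'$-stable (at each point where the diagonal meets a constant section the total weight is $2/3+1/3=1$), the induced map $B\to\Mg{0,\A'}$ is non-constant, and $\psi_\tau\cdot B=-\sigma_\tau^2=-2<0$. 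Consequently your inductive step for $Z\subset\Delta_{ij}$ does not close: you would need $A(\A')+(1-(a_i+a_j))\psi_\tau=\chi^*A$ to be ample, which is essentially the statement being proved and cannot be extracted from the induction hypothesis plus a false nefness claim. (A second slip in the same paragraph: the restriction of a big and nef divisor to a subvariety need not be big; for the Nakai test you need ampleness of $A|_{\Delta_{ij}}$, i.e.\ positive top self-intersection on \emph{every} $Z\subset\Delta_{ij}$.) The paper avoids this by testing on curves $B$ and splitting according to whether the coincident sections have negative self-intersection \emph{on that particular} $B$: if $\sigma^2<0$ then $\psi_\tau\cdot B>0$ and Lemma \ref{L:replacement} wins, while if $\sigma^2\geq 0$ the perturbed class $(1-\epsilon)\omega+\sum a_i\sigma_i$ is still nef and the error term $\epsilon(\kappa+\psi)$ is absorbed via Lemma \ref{L:effectivity}. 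Your $g\geq 1$ paragraph is also only a sketch, and not the paper's route: there is no useful map to $\Mg{g}$ here ($\kappa$ on $\Mg{g,\A}$ is not pulled back from $\Mg{g}$); the paper instead uses the reduction morphism $f\co\Mg{g,n}\to\Mg{g,\A}$ and the ampleness of $12\lambda+\psi-\Delta$ on $\Mg{g,n}$ due to Cornalba--Harris and Gibney--Keel--Morrison.
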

\begin{proof}
We claim that a sufficiently small neighborhood (in the Euclidean topology with the usual norm $\Vert \cdot \Vert$) of $A$  lies inside
the nef cone of the vector space $\NS(\Mg{g,\A})\otimes_{\ZZ}\QQ$. The statement then follows from Kleiman's criterion \cite{kl} applied to the coarse moduli space of
$\Mg{g,\A}$.
The proof is by induction on dimension. When dimension is $1$ (i.e., when $g=0$ and $|\A|=4$, or $g=1$ and $|\A|=1$) the proof is by direct computation.
\par
Suppose $\dim \Mg{g,\A} > 1$. By the functoriallity of $A$ 
and by the induction assumption, 
a sufficiently small perturbation of $A$ is still ample when restricted to any boundary divisor
in $\Delta_{n}$.  
It remains to show that for any $P\in \NS(\Mg{g,\A})\otimes\QQ$ satisfying $0\leq \Vert P \Vert \ll 1$, the perturbed divisor $A+P$ has non-negative
degree 
on any family $\C\ra B$ with a generically smooth fiber of genus $g$.
\par
The proof falls into two parts: $g=0$ and $g\geq 1$.
\par
{\em Case of $g=0$}:
First, suppose that no sections of negative self-intersection 
are coincident. 
Then for $0< \epsilon \ll 1$  
the divisor $(1-\epsilon)\omega+\sum a_i \sigma_i$ is nef on $\C$ by Proposition \ref{P:relatively-nef} (see also Remark 1).
It follows that the divisor 
\begin{align*}
A(\epsilon) 
&=\left((1-\epsilon)\omega+\sum a_i\sigma_i \right)
(\omega+\sum \sigma_i) =A-\epsilon(\kappa+\psi).
\end{align*}
is non-negative on $B$. By Lemma \ref{L:effectivity}, the divisor $\kappa+\psi$ is a positive linear combination of 
all boundary divisors and all divisors in $\Delta_s$. 
Since irreducible
components of these 
divisors generate $\NS(\Mg{0,\A})$, we conclude that 
$\epsilon(\kappa+\psi)+P$ 
is an effective linear combination of these generators. 
Therefore,
$A+P=A(\epsilon)+\epsilon(\kappa+\psi)+P$ intersects $B$ non-negatively.

Suppose that $k\geq 2$ sections $\{\sigma_{i_\ell}\}_{\ell=1}^k$ satisfying $\sum_{\ell=1}^k a_{i_\ell}\leq 1$ and $\sigma_{i_1}^2<0$ are coincident. 
Let $\chi\co \Mg{0,\A'} \ra \Mg{0,\A}$ 
be the (closed immersion) replacement morphism replacing a section 
$\tau$
of weight 
$\sum_{\ell=1}^k a_{i_\ell}$ by $k$ sections $\{\sigma_{i_\ell}\}_{\ell=1}^k$ of weights 
$\{a_{i_\ell}\}_{\ell=1}^k$. Then by 
Lemma \ref{L:replacement}, the divisor $A$ pulls back to the 
sum of $\psi_\tau$ and the 
divisor $A(\sum_{\ell=1}^k a_{i_\ell}, a_{k+1},\dots, a_n)$ on
$\Mg{0,\A'}$. The latter is ample by the induction 
assumption ($\dim(\Mg{0,\A'})=\dim(\Mg{0,\A})-k+1$). 
In particular, the divisor $A+P$ 
pulls back to the sum of ample divisor and $\psi_\tau$.
By the assumption $\sigma_{i_1}^2<0$, hence 
$\psi_\tau$ has positive degree on the 
curve $B$. It follows from the projection formula that $B\cdot(A+P)\geq 0$.
\par
{\em Case of $g\geq 1$}:
Denote by $f\co \Mg{g,n}\ra \Mg{g,\A}$ the contraction morphism. 
\par 
Suppose that no sections of $(\C\ra B; \sigma_1,\dots,\sigma_n)$ 
are coincident.
Then $B$ is not entirely contained in $f(\Exc(f))$. As in the case of $g=0$, for 
$0< \epsilon\ll 1$, the divisor $(1-\epsilon)\omega+\sum a_i\sigma_i$ is nef on $\C$. 
It follows that the divisor 
\begin{align}
\label{E:perturbation}
A(\epsilon)=\left((1-\epsilon)\omega+\sum a_i\sigma_i\right)(\omega+\sum \sigma_i)=A-\epsilon(\kappa+\psi)
\end{align}
is non-negative on $B$.
It is well-known that the divisor $\kappa+\psi=12\lambda-\Delta+\psi$ is ample on $\Mg{g,n}$ \cite{CH, GKM}. 
Then for any $P\in \NS(\Mg{g,\A})\otimes\QQ$ satisfying $\vert P \vert \ll 1$, the divisor $f^*\left(P+\epsilon(\kappa+\psi)\right)$
is an effective combination of an ample divisor and an $f$-exceptional divisor. Hence, $B\cdot\left(P+\epsilon(\kappa+\psi)\right)\geq 0$.

Finally, suppose that two sections are coincident. Then $B$ lies in the image of some replacement morphism $\chi$. By Lemma \ref{L:replacement}
and the induction assumption,
the divisor $A+P$ pulls back to a sum of an ample divisor and a $\psi$ class. Since the 
 self-intersection of any section in a family of stable curves of
positive genus is non-positive, we are done. 
\end{proof}

\section{Certain log-canonical models of $\Mg{g,n}$}
By \cite[Section 3.1.1]{Hassett-weighted}, the canonical class of $\Mg{g,\A}$ is 
\[
K_{\Mg{g,\A}}=13\lambda-2\Delta_{nodal}+\psi=\lambda+\kappa-\Delta_{nodal}+\psi.
\] 
The ample divisor $A$ of Proposition \ref{P:ample-divisors} can be written as

\[
A=K_{\Mg{g,\A}}+\sum_{i<j}(a_{i}+a_{j})\Delta_{i,j}+\Delta_{nodal}-\lambda.
\] 
Since $\lambda$ is always nef, we see that
$A+\lambda=K_{\Mg{g,\A}}+\sum_{i<j}(a_{i}+a_{j})\Delta_{i,j}+\Delta_{nodal}$ is a natural ample log canonical divisor on 
$\Mg{g,\A}$.
Using this, we can give an affirmative answer to \cite[Problem 7.1]{Hassett-weighted}. The statement is cleanest for $g=0$,
when all the spaces under consideration are smooth
schemes. For $g>0$, $\Mg{g,\A}$ is only a smooth proper
Deligne-Mumford stack. We denote its coarse moduli 
space by $\overline{M}_{g,\A}$.
\begin{theorem}\label{T:hassett-spaces}
The coarse moduli space $\overline{M}_{g,\A}$ is a log canonical model of $\Mg{g,n}$. Namely,
\[
\overline{M}_{g,\A}=\proj \bigoplus_{m\geq 0}H^{0}\bigl(\Mg{g,n}, m(K_{\Mg{g,n}}+
\sum_{i<j: a_i+a_j\leq 1}(a_{i}+a_{j})\Delta_{\{i,j\}}+\Delta_{\text{\rm rest}})\bigr),
\]
where $\Delta_{\text{\rm rest}}=\Delta-\sum\limits_{i<j: a_i+a_j\leq 1}\Delta_{\{i,j\}}$.
\end{theorem}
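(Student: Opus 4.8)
The plan is to exhibit the section ring of the log canonical divisor
\[
D:=K_{\Mg{g,n}}+\sum_{i<j:\,a_i+a_j\leq 1}(a_i+a_j)\Delta_{\{i,j\}}+\Delta_{\rest}
\]
on $\Mg{g,n}$ as the section ring of an ample divisor on $\overline{M}_{g,\A}$, through Hassett's reduction morphism $f\co \Mg{g,n}\ra \Mg{g,\A}$ (the one lowering all weights from $1$ to the $a_i$), which is a birational contraction. By Proposition \ref{P:ample-divisors} and the nefness of $\lambda$, the divisor
\[
A+\lambda=K_{\Mg{g,\A}}+\sum_{i<j}(a_i+a_j)\Delta_{i,j}+\Delta_{nodal}
\]
is the pullback of an ample divisor from the coarse space $\overline{M}_{g,\A}$; in particular $\overline{M}_{g,\A}=\proj\bigoplus_{m}H^0\bigl(\overline{M}_{g,\A},m(A+\lambda)\bigr)$. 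It therefore suffices to produce, after passing to a Veronese, a graded isomorphism between this ring and $\bigoplus_m H^0(\Mg{g,n},mD)$.

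The first step is the pushforward identity $f_*D=A+\lambda$, a combinatorial check on boundary divisors. For $a_i+a_j\leq 1$ the divisor $\Delta_{\{i,j\}}$ maps birationally onto the collision locus $\Delta_{i,j}$, while $\Delta_{i,j}$ vanishes on $\Mg{g,\A}$ when $a_i+a_j>1$ (the marks cannot collide), so $\sum_{i<j}(a_i+a_j)\Delta_{i,j}=\sum_{a_i+a_j\leq 1}(a_i+a_j)\Delta_{i,j}$ there; the components of $\Delta_{\rest}$ that are not contracted map onto $\Delta_{nodal}$; and the divisors $\Delta_S$ with $|S|\geq 3$ and $\sum_{i\in S}a_i\leq 1$ are exactly the $f$-exceptional ones, their image having codimension $|S|-1\geq 2$, so they push forward to zero. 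Since $f$ is birational, $f_*K_{\Mg{g,n}}=K_{\Mg{g,\A}}$, and assembling these identifications yields $f_*D=A+\lambda$. Consequently $E:=D-f^*(A+\lambda)$ is an $f$-exceptional $\QQ$-divisor supported on the $\Delta_S$ above.

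The crux is the effectivity of $E$. Writing $D=K_{\Mg{g,n}}+B$ and $A+\lambda=K_{\Mg{g,\A}}+B'$ with $f_*B=B'$, and observing that $B$ exceeds the strict transform of $B'$ precisely by the exceptional divisors $\Delta_S$ (each with coefficient $1$, as they lie in $\Delta_{\rest}$), the coefficient of $\Delta_S$ in $E$ equals the log discrepancy of $\Delta_S$ for the pair $(\Mg{g,\A},B')$. To evaluate it I would use the explicit pullback formulae for the tautological classes under $f$ — namely $f^*\lambda=\lambda$ together with the comparison formulae for $f^*\psi$, $f^*\Delta_{nodal}$ and $f^*\Delta_{i,j}$ — which are the genus-$g$, several-section analogues of \cite[Lemma 2.9]{FS}. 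In the local model of a cluster of $k=|S|$ sections colliding on a smooth curve, $f$ is dominated by the blow-up of the small diagonal (codimension $k-1$), through which all $\binom{k}{2}$ collision divisors $\Delta_{i,j}$, $i,j\in S$, pass with multiplicity one; a direct discrepancy computation then identifies the coefficient of $\Delta_S$ in $E$ as
\[
(|S|-1)\Bigl(1-\sum_{i\in S}a_i\Bigr)\geq 0,
\]
the inequality being exactly the $\A$-stability bound $\sum_{i\in S}a_i\leq 1$. I expect this effectivity statement, and the careful derivation of the pullback formulae on which it rests, to be the main obstacle, since it is here that the exact coefficients $(a_i+a_j)$ and the choice of $\Delta_{\rest}$ are forced.

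Granting that $E$ is effective and $f$-exceptional, one has $f_*\O_{\Mg{g,n}}(\lfloor mE\rfloor)=\O_{\Mg{g,\A}}$, so the projection formula gives $H^0(\Mg{g,n},mD)=H^0\bigl(\overline{M}_{g,\A},m(A+\lambda)\bigr)$ for all $m$ sufficiently divisible, hence an isomorphism of section rings after a Veronese. Taking $\proj$ and invoking the ampleness of $A+\lambda$ on $\overline{M}_{g,\A}$ yields the asserted identification $\overline{M}_{g,\A}=\proj\bigoplus_m H^0(\Mg{g,n},mD)$. For $g=0$ every space in sight is a smooth projective scheme and the argument is literal; for $g\geq 1$ one runs it on the coarse moduli spaces, where the same pushforward and discrepancy computations hold, the only additional input being the standard comparison between the canonical classes of $\Mg{g,n}$ and of its coarse space.
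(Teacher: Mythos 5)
Your proof follows the same route as the paper's: push the log canonical divisor forward under the reduction morphism $f$ to the ample divisor $A+\lambda$ of Proposition \ref{P:ample-divisors}, show that the difference from $f^{*}(A+\lambda)$ is an effective $f$-exceptional divisor with coefficient $(|S|-1)\bigl(1-\sum_{i\in S}a_i\bigr)$ on each contracted $\Delta_S$, and conclude by identifying the section rings via the projection formula. You supply more detail than the paper does on the discrepancy computation (the local blow-up model of the small diagonal) and on the final section-ring identification, and your sign convention --- $D-f^{*}(A+\lambda)$ effective --- is the one the argument actually requires; note that the paper's displayed formula places the exceptional sum on the other side of the equation, which is inconsistent with its own verbal claim, so your version is the corrected one.
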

\begin{proof}

Consider the reduction morphism $f \co \Mg{g,n} \ra \Mg{g,\A}$. Note that
\[
K_{\Mg{g,\A}}+\sum_{i<j}(a_{i}+a_{j})\Delta_{i,j}+\Delta_{n}=A+\lambda
\]
By Proposition \ref{P:ample-divisors}, the divisor $A$ is ample on $\Mg{g,\A}$. Since $\lambda$ is nef, the sum $A+\lambda$ is ample. 

It remains to observe that $f_{*}\left(K_{\Mg{g,n}}+\sum_{i<j: a_i+a_j\leq 1}(a_{i}+a_{j})\Delta_{\{i,j\}}+
\Delta_\rest\right)=A+\lambda$ and the discrepancy divisor
\[
\left(K_{\Mg{g,n}}+\sum_{i<j: a_i+a_j\leq 1}(a_{i}+a_{j})\Delta_{\{i,j\}}+\Delta_{\text{rest}}\right)-f^{*}(A+\lambda)
\]
is an effective combination of exceptional divisors with all discrepancies in the interval $[0,\infty)$. Indeed,

\begin{align*}
f^{*}(A+\lambda) &=\left(K_{\Mg{g,n}}+\sum_{i<j: a_i+a_j\leq 1}(a_{i}+a_{j})\Delta_{\{i,j\}}+\Delta_{\text{rest}}\right) \\
&+\sum_{\substack{S : |S|\geq 3\\ \sum_{i\in S} a_i \leq 1}} \left(|S|-1\right)\left(1-\sum_{i\in S} a_i\right)\Delta_S
\end{align*}
\end{proof}

\section{Nef divisors on $\Mg{0,n}$}
\label{S:F-cone}
\subsection{The case of $g=0$}
We keep the conventions of 
Sections \ref{S:main} and \ref{S:divisors}. On $\Mg{0,n}$, we let $\Delta_r$ to be 
the union of all boundary divisors $\Delta_{S}$ with $|S|=r$. 
Recall that  the canonical 
divisor of $\Mg{0,\A}$ has class
$K_{\Mg{0,\A}}=\psi-2\Delta_{nodal}$. We now restate Theorems   \ref{T:positive-divisors}  and
\ref{T:hassett-spaces} 
for 
$\Mg{0,n}$.

\begin{theorem}\label{T:hassett-spaces-0}
The Hassett's space $\Mg{0,\A}$ is a log canonical model of $\Mg{0,n}$. Namely,
\[
\Mg{0,\A}=\proj \bigoplus_{m\geq 0}H^{0}(\Mg{0,n}, m(K_{\Mg{0,n}}+\sum_{i<j}(a_{i}+a_{j})\Delta_{i,j}+\sum_{r\geq 3}
\Delta_{r})).
\]
\end{theorem}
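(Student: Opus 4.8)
The plan is to specialize Theorem \ref{T:hassett-spaces} to the genus $0$ case, using the fact that in this case $\lambda = 0$ on $\Mg{0,n}$ (a rational curve has no holomorphic one-forms, so $\pi_*\omega_{\C/B}$ has no positive rank contribution to Hodge class), which simplifies the formulas considerably. First I would record that when $g=0$ the canonical class is $K_{\Mg{0,\A}}=\psi-2\Delta_{nodal}$, as stated, rather than the general $13\lambda-2\Delta_{nodal}+\psi$. Consequently the ample divisor $A$ of Proposition \ref{P:ample-divisors} can be rewritten purely in terms of $\psi$, $\Delta_{nodal}$ and the $\Delta_{i,j}$: since $\lambda=0$ the expression $A=K_{\Mg{g,\A}}+\sum_{i<j}(a_i+a_j)\Delta_{i,j}+\Delta_{nodal}-\lambda$ becomes $A=K_{\Mg{0,\A}}+\sum_{i<j}(a_i+a_j)\Delta_{i,j}+\Delta_{nodal}$, so $A$ itself (not merely $A+\lambda$) is the natural ample log canonical divisor. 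This is the cleaner phenomenon promised in the text: for $g>0$ one must add $\lambda$ to obtain ampleness, but for $g=0$ the divisor $A$ is already ample by Proposition \ref{P:ample-divisors}.

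Next I would set up the reduction morphism $f\co \Mg{0,n}\ra \Mg{0,\A}$ and verify the pushforward and discrepancy identities exactly as in the proof of Theorem \ref{T:hassett-spaces}, but now with $\lambda$ absent and with the boundary notation adapted to $g=0$. The key is to compute $f^*A$ in terms of the boundary divisors $\Delta_{i,j}$ and the $\Delta_r$ (recall on $\Mg{0,n}$ we have $\kappa=-\Delta$, and all boundary divisors $\Delta_S$ with $|S|\geq 2$ and $n-|S|\geq 2$ appear). I would express
\[
f^*A=\Bigl(K_{\Mg{0,n}}+\sum_{i<j}(a_i+a_j)\Delta_{i,j}+\sum_{r\geq 3}\Delta_r\Bigr)+\sum_{\substack{S:\,|S|\geq 3\\ \sum_{i\in S}a_i\leq 1}}(|S|-1)\Bigl(1-\sum_{i\in S}a_i\Bigr)\Delta_S,
\]
which is the genus $0$ instance of the discrepancy formula in the proof of Theorem \ref{T:hassett-spaces}. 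Because every coefficient $(|S|-1)(1-\sum_{i\in S}a_i)$ is non-negative, the discrepancy divisor is an effective sum of $f$-exceptional divisors with discrepancies in $[0,\infty)$, which is exactly the log canonical condition.

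With ampleness of $A$ on $\Mg{0,\A}$ in hand and the pushforward identity $f_*\bigl(K_{\Mg{0,n}}+\sum_{i<j}(a_i+a_j)\Delta_{i,j}+\sum_{r\geq 3}\Delta_r\bigr)=A$, the Proj description follows from the standard theory of log canonical models: the section ring of the log canonical divisor on $\Mg{0,n}$ agrees with that of the ample divisor $A$ on $\Mg{0,\A}$ under $f$, because $f$ is a birational contraction and the discrepancies are non-negative, so higher pushforwards of powers contribute nothing new and $H^0$ is preserved. Since $\Mg{0,\A}$ is a smooth projective scheme (not merely a stack) and $A$ is genuinely ample on it, one concludes
\[
\Mg{0,\A}=\proj\bigoplus_{m\geq 0}H^0\Bigl(\Mg{0,\A},mA\Bigr)=\proj\bigoplus_{m\geq 0}H^0\Bigl(\Mg{0,n},m\bigl(K_{\Mg{0,n}}+\sum_{i<j}(a_i+a_j)\Delta_{i,j}+\sum_{r\geq 3}\Delta_r\bigr)\Bigr).
\]

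The main obstacle I anticipate is the discrepancy computation: verifying that the coefficients $(|S|-1)(1-\sum_{i\in S}a_i)$ are correct requires carefully tracking how each boundary stratum $\Delta_S$ of $\Mg{0,n}$ maps under $f$ and how $\psi$, $\kappa=-\Delta$, and the various $\Delta_{i,j}$ transform. In particular I must be careful that the subsets $S$ with $|S|=2$ and $\sum a_i>1$ are contracted (these are the $\Delta_{i,j}$ that get absorbed into $\Delta_{\rest}$ in the higher-genus statement), while the strata with $\sum_{i\in S}a_i\leq 1$ and $|S|\geq 3$ are precisely those contracted by $f$ and hence carry the positive discrepancies. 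Fortunately, because the excerpt already establishes the general discrepancy formula in the proof of Theorem \ref{T:hassett-spaces}, this genus $0$ case is obtained by substitution rather than by an independent calculation, so the obstacle is bookkeeping rather than genuine difficulty.
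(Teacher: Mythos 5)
Your proposal is correct and follows essentially the same route as the paper, which offers no independent proof of this theorem: it is presented as the genus-zero restatement of Theorem \ref{T:hassett-spaces}, obtained exactly as you do by noting $\lambda=0$ (so that $A$ itself, rather than $A+\lambda$, is the ample log canonical divisor on $\Mg{0,\A}$) and specializing the pushforward and discrepancy identities from that proof. The only quibble is the aside in your final paragraph asserting that the strata $\Delta_S$ with $|S|=2$ and $a_i+a_j>1$ are contracted by $f$ --- they are not (they map onto components of $\Delta_{nodal}$ in $\Mg{0,\A}$); only the $\Delta_S$ with $|S|\geq 3$ and $\sum_{i\in S}a_i\leq 1$ are $f$-exceptional, which is what your displayed discrepancy formula correctly reflects.
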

\begin{theorem}\label{T:positive-divisors-0}
The divisor 
\[
A(a_1,\dots,a_n) =\psi+\sum_{i<j}(a_i+a_j)\Delta_{ij}-\Delta_{nodal},  
\]
is ample on $\Mg{0,\A}$ and the divisors
\begin{align*}
B(a_1,\dots,a_n) &=\sum_{i=1}^n (2a_i-a_i^2)\psi_i+\sum_{i<j}(2a_ia_j) \Delta_{ij}-\Delta_{nodal}, \\
C(a_1,\dots,a_n) &= \sum_{i=1}^n (1-a_{i})\psi_{i}+\sum_{j\neq i}(a_i+a_j)\Delta_{ij}
\end{align*}
are nef on $\Mg{0,\A}$. 
 \end{theorem}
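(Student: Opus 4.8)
The plan is to deduce the genus-zero statement directly from the positivity results already established for arbitrary genus, specializing them through the vanishing of the Hodge class. First I would record the two relations that govern genus zero. Since every fibre of a family of arithmetic-genus-zero curves satisfies $h^{0}(\omega)=0$, the sheaf $\pi_*\omega_{\C/B}$ vanishes and hence $\lambda=0$ on $\Mg{0,\A}$. Feeding $\lambda=0$ into Mumford's relation $\kappa=12\lambda-\Delta_{nodal}$ yields the key identity $\kappa=-\Delta_{nodal}$, which is consistent both with the computation $\kappa=-\Delta$ used in Lemma \ref{L:effectivity} and with the class $K_{\Mg{0,\A}}=\psi-2\Delta_{nodal}$ recorded above.

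With this in hand, each of the three divisors is recognized as the genus-zero value of its arbitrary-genus counterpart in Theorem \ref{T:positive-divisors} under the substitution $\kappa\mapsto-\Delta_{nodal}$. For $A$ I would observe the exact identity
\[
\kappa+\psi+\sum_{i<j}(a_i+a_j)\Delta_{ij}=\psi+\sum_{i<j}(a_i+a_j)\Delta_{ij}-\Delta_{nodal}=A(a_1,\dots,a_n),
\]
so the genus-zero $A$ is literally the genus-zero value of the divisor $A$ of Theorem \ref{T:positive-divisors}. Its ampleness is then Proposition \ref{P:ample-divisors}, upgraded from ``pullback of an ample class on the coarse space'' to honest ampleness by the fact that for $g=0$ the stack $\Mg{0,\A}$ is a smooth scheme and therefore coincides with its coarse moduli space. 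The same substitution turns the genus-$g$ divisor $B$ into the displayed $B(a_1,\dots,a_n)$, which is thus nef by Theorem \ref{T:positive-divisors}.

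For $C$ I would note that the listed divisor is nothing but the sum $\sum_{i=1}^{n}C_i$ of the nef divisors $C_i=(1-a_i)\psi_i+\sum_{j\neq i}a_j\Delta_{ij}$ of Theorem \ref{T:positive-divisors}: summing over $i$ contributes $\sum_i(1-a_i)\psi_i$ on the $\psi$-part, and, because $\Delta_{ij}=\Delta_{ji}$, assigns to each $\Delta_{ij}$ the total coefficient $a_i+a_j$, matching $C(a_1,\dots,a_n)$. As the $C_i$ involve no $\kappa$, they are unchanged in genus zero, and a finite sum of nef divisors is nef.

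The substance of the argument is therefore carried entirely by the earlier results, and I expect no genuine obstacle in the bookkeeping, since the three identities above are exact rather than merely correct modulo boundary corrections. The one point that requires care is the final step for $A$: the passage from ``pullback of an ample divisor on the coarse space'' in Proposition \ref{P:ample-divisors} to genuine ampleness on $\Mg{0,\A}$ relies precisely on the genus-zero spaces being schemes, which is why the statement is cleanest in this case.
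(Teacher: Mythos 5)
Your proposal is correct and matches the paper's (implicit) argument: the paper offers no separate proof, presenting Theorem \ref{T:positive-divisors-0} explicitly as a restatement of Theorem \ref{T:positive-divisors} and Proposition \ref{P:ample-divisors} in genus zero, which is exactly the specialization you carry out via $\lambda=0$, $\kappa=-\Delta_{nodal}$, the identification $C=\sum_i C_i$, and the scheme-versus-coarse-space remark for ampleness.
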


\subsection{$\SL_2$ quotients of $(\PP^1)^n$}
In \cite{Alexeev-Swinarski}, Alexeev and Swinarski introduced a subcone, called the {\em GIT cone}, of the nef cone of $\Mg{0,n}$. The GIT cone is generated by pullbacks of natural polarizations on GIT 
quotients $(\PP^1)^n\gitq_{\vec{x}} \SL_2$. Here, we show that the GIT cone is contained in the cone generated by divisors of Theorem 
\ref{T:positive-divisors-0}. We do not preclude a possibility that two cones coincide. The idea of proof is simple: We observe that the GIT polarization on $(\PP^1)^n\gitq_{\vec{x}} \SL_2$ is proportional to $A(x_1,\dots,x_n)$. 

In what follows, we regard $(\PP^1)^n\gitq_{\vec{x}} \SL_2$ as a good moduli space of an 
Artin moduli stack\footnote{The stack is Deligne-Mumford when the linearization is typical, i.e. there are no strictly semistable 
points.} of weighted $n$\nb-pointed rational curves. 
In particular, all tautological divisors introduced in Section \ref{S:divisors}
make sense on $(\PP^1)^n\gitq_{\vec{x}} \SL_2$. 
\begin{prop}\label{P:GIT}
Let $\vec{x}=(x_1,\dots,x_n)$ be such that $x_1+\dots+x_n=2$. Then the natural GIT polarization on $(\PP^1)^n\gitq_{\vec{x}} \SL_2$
is proportional to $A(x_1,\dots,x_n)=\psi+\sum_{i<j}(x_i+x_j)\Delta_{ij}-\Delta_{nodal}$.
\end{prop}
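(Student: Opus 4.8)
The plan is to exhibit both divisors as explicit classes on the quotient stack and match coefficients. Realize $Q := (\PP^1)^n \gitq_{\vec x} \SL_2$ as the good moduli space of the quotient stack $\X = [((\PP^1)^n)^{\ss}/\SL_2]$, whose universal family of $n$-pointed rational curves is the diagonal quotient $\pi \co \C = [((\PP^1)^n)^{\ss}\times \PP^1/\SL_2] \ra \X$, the $i$-th section being $\sigma_i\co q \mapsto (q,q_i)$. Let $\eta$ denote the first Chern class of $\O_{\PP^1}(1)$ on the last factor (with its canonical $\SL_2$-linearization), let $H_i$ be the hyperplane class pulled back from the $i$-th factor of $(\PP^1)^n$, and let $\bar H_i \in \Pic(\X)_\QQ$ be the class it descends from on the base, so that $H_i = \pi^*\bar H_i$. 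Since each fiber is a $\PP^1$ one has $\omega = \omega_\pi = -2\eta$, the natural GIT polarization $\O(x_1,\dots,x_n)$ descends to $\sum_i x_i \bar H_i$, and the section $\Sigma_i = \{p = q_i\}$, being a diagonal, has class $[\Sigma_i] = H_i + \eta$.

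I would then compute the tautological classes by pushing forward along $\pi$. The projection formula together with $\pi_* \eta = 1$ and $\pi_*(1) = 0$ gives $\pi_* H_i = \pi_*(H_iH_j) = 0$ and $\pi_*(\eta H_i) = \bar H_i$, so the only delicate term is $\pi_*(\eta^2)$. Here is the crux: although $\eta^2$ does not vanish on $\C$, its pushforward does. Since $\C = \X\times_{\mathrm B \SL_2}[\PP^1/\SL_2]$ and $\eta$ is pulled back from the universal $\PP^1$ over the classifying stack, base change identifies $\pi_*(\eta^2)$ with the pullback of $\rho_*(\eta^2) \in A^1(\mathrm B\SL_2) = \Pic(\mathrm B \SL_2)$, where $\rho\co [\PP^1/\SL_2]\ra \mathrm B\SL_2$; as $\SL_2$ has no nontrivial characters this group is zero, so $\pi_*(\eta^2) = 0$. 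Granting this, expanding $[\Sigma_i]^2$ and $[\Sigma_i][\Sigma_j]$ and pushing forward yields $\psi_i = -\pi_*[\Sigma_i]^2 = -2\bar H_i$ and $\Delta_{ij} = \pi_*([\Sigma_i][\Sigma_j]) = \bar H_i + \bar H_j$. Finally, every curve in the family is a smooth $\PP^1$, so there are no nodal fibers and $\Delta_{nodal} = 0$.

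It then remains to assemble $A$ and invoke the hypothesis $\sum_i x_i = 2$. One finds $\psi = -2\sum_i \bar H_i$, while the coefficient of $\bar H_k$ in $\sum_{i<j}(x_i+x_j)\Delta_{ij}$ equals $\sum_{j \ne k}(x_k+x_j) = (n-2)x_k + 2$, the constraint $\sum_j x_j = 2$ entering precisely here. Therefore
\[
A(x_1,\dots,x_n) = \psi + \sum_{i<j}(x_i+x_j)\Delta_{ij} - \Delta_{nodal} = \sum_k \bigl((n-2)x_k + 2 - 2\bigr)\bar H_k = (n-2)\sum_k x_k \bar H_k,
\]
which is $(n-2)$ times the descended GIT polarization. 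Since these identities hold on $\X$ and the pullback $\Pic(Q)_\QQ \ra \Pic(\X)_\QQ$ along the good moduli space morphism is injective, the proportionality is an honest relation on $Q$, proving the proposition. The main obstacle is the equivariant vanishing $\pi_*(\eta^2) = 0$: naively $\O(1)^2 = 0$ on a single $\PP^1$, but this fails after the diagonal $\SL_2$-quotient, and the correct statement — that only the pushforward vanishes — is exactly what makes the clean coefficients $\psi_i = -2\bar H_i$ and $\Delta_{ij} = \bar H_i + \bar H_j$ come out.
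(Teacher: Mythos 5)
Your proof is correct, and it arrives at the same final identity $A(x_1,\dots,x_n)=(n-2)\sum_k x_k \bar H_k$ as the paper, but it is organized differently. The paper works \emph{non-equivariantly} upstairs on $(\PP^1)^n\times\PP^1\ra(\PP^1)^n$, where the class $H_{n+1}$ pulled back from the fiber factor satisfies $H_{n+1}^2=0$ for free (it comes from a curve), computes the single product $\pi_*\bigl((\omega+\sum x_i\tau_i)(\omega+\sum\tau_i)\bigr)=(n-2)\sum x_iH_i$, and then identifies the left-hand side with $A(x_1,\dots,x_n)$ in one stroke via the functorial identity $A=L\cdot(\omega+\sum\sigma_i)$ established in the proof of Theorem \ref{T:positive-divisors}; the individual formulas $\psi_i=-2H_i$, $\Delta_{ij}=-\tfrac12(\psi_i+\psi_j)$, $\Delta_{nodal}=0$ appear only as a concluding remark. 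You instead compute each tautological class separately on the quotient stack $[((\PP^1)^n)^{\ss}/\SL_2]$ and assemble $A$ by hand. This makes the argument self-contained (no appeal to the product identity from Theorem \ref{T:positive-divisors}) but forces you to confront the genuinely equivariant point that $\eta^2\neq 0$ on $[\PP^1/\SL_2]$ while $\pi_*(\eta^2)=0$; your justification via base change to $\rho\co[\PP^1/\SL_2]\ra\mathrm{B}\SL_2$ and $\Pic(\mathrm{B}\SL_2)=\Hom(\SL_2,\mathbb{G}_m)=0$ is valid (and note that $\O_{\PP^1}(1)$ does carry an $\SL_2$-linearization, and $\wedge^2$ of the standard representation is trivial, so $[\Sigma_i]=H_i+\eta$ and $\omega=-2\eta$ hold equivariantly, as you use). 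The coefficient extraction $\sum_{j\neq k}(x_k+x_j)=(n-2)x_k+2$ is exactly where the hypothesis $\sum_j x_j=2$ enters in both arguments. In short: same computation at bottom, but the paper's route through the product formula and the non-equivariant model avoids the equivariant vanishing you had to supply, while your route gives an explicit dictionary for every tautological class as part of the main argument rather than as an afterthought.
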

\begin{proof}
We treat $(\PP^1)^n$ as the parameter space of $n$ ordered points on $\PP^1$. 
Consider the universal family $\pi\co (\PP^1)^n\times \PP^1\ra (\PP^1)^n$ with sections $\tau_1,\dots, \tau_n\co (\PP^1)^n\ra (\PP^1)^n\times \PP^1$. For $i=1,\dots, n+1$, set 
$H_i=\text{pr}_i^*\O_{\PP^1}(1)$. Then $\tau_i=H_i+H_{n+1}$, for $i=1,\dots, n$, and the relative dualizing sheaf of the universal 
family is $\omega=-2H_{n+1}$. We compute 
\begin{align}\label{E:git-Pn}
(\omega+\sum_{i=1}^n x_i \tau_i)(\omega+\sum_{i=1}^n \tau_i)=\left(\sum_{i=1}^n x_i H_i\right)\left((n-2)H_{n+1}+\sum_{i=1}^{n}H_i\right).
\end{align}
Pushing forward via $\pi$, we obtain that on $(\PP^1)^n$
\begin{align}\label{E:git-Pn2}
\pi_*(\omega+\sum_{i=1}^n x_i \tau_i)(\omega+\sum_{i=1}^n \tau_i)=(n-2)\sum_{i=1}^n x_i H_i.
\end{align}
By definition, an integer multiple of $\O_{(\PP^1)^n}(\sum_{i=1}^n x_i H_i)$ descends to the GIT polarization on 
$(\PP^1)^n\gitq_{\vec{x}} \SL_2$. Hence, the divisor 
class $\pi_*(\omega+\sum_{i=1}^n x_i \tau_i)(\omega+\sum_{i=1}^n \tau_i)$ descends to a multiple of the GIT polarization.
On the other hand, since $\omega$ and $\tau_i$ are defined functorially, the divisor 
class $\pi_*(\omega+\sum_{i=1}^n x_i \tau_i)(\omega+\sum_{i=1}^n \tau_i)$ descends to $A(x_1,\dots, x_n)$ on $(\PP^1)^n\gitq_{\vec{x}} \SL_2$. The statement follows.

For completeness, we record what divisor classes on $(\PP^1)^n$ descend to tautological divisor classes $\psi_i$, $\Delta_{ij}$ and 
$\Delta_{nodal}$ on $(\PP^1)^n\gitq_{\vec{x}} \SL_2$. On $(\PP^1)^n$:
$$\pi_*(-\tau^2_i)=\pi_*(-(H_i+H_{n+1})^2)=-2H_i.$$
It follows that $\O_{(\PP^1)^n}(-2H_i)$ descends to $\psi_i$. Further, since $(\tau_i-\tau_j)^2=0$ on $(\PP^1)^n\times\PP^1$, we conclude that
$$\Delta_{ij}=-\frac{1}{2}(\psi_i+\psi_j).$$
Finally, $\Delta_{nodal}=0$ because all curves parameterized by $(\PP^1)^n\gitq_{\vec{x}} \SL_2$ are $\PP^1$s. 

We conclude that the GIT polarization on $(\PP^1)^n\gitq_{\vec{x}} \SL_2$ is written in terms of tautological divisor classes
as 
$$
A(x_1,\dots, x_n)=\sum_{i=1}^n\psi_i+\sum_{i<j}(x_i+x_j)\Delta_{ij}-\Delta_{nodal}=-\frac{(n-2)}{2}\sum_{i=1}^n x_i \psi_i.
$$
\end{proof}

\subsection{Possible extensions} 

The main positivity result of Proposition \ref{P:relatively-nef} can be used to obtain
other nef divisors on $\Mg{0,n}$. We present now an example of such application.
\subsubsection{Case of $n=6$}
We consider a weight vector $\A=(1/2,1/2,1/2,1/2,1/2,1/2)$. 
Clearly, $\Mg{0,\A}\cong \Mg{0,6}$.
Since the Fulton's conjecture holds for $n=6$ (\cite[Theorem 1.2]{keel-mckernan} or
\cite[Theorem 2]{FG}), the symmetric nef cone of $\Mg{0,\A}$ has two extremal rays 
generated by divisor classes dual to $F$-curves.
In terms of tautological divisor classes they are $2\Delta_{nodal}-\psi$ and
$\psi+\Delta_s$. The latter is $\frac{3}{2}\cdot C(1/3,\dots,1/3)$, where 
$C(1/3,\dots,1/3)$ is the nef divisor
of Theorem \ref{T:positive-divisors-0}. We establish that the former is nef 
using Proposition \ref{P:relatively-nef} and a case-by-case analysis.
\begin{prop} The divisor $2\Delta_{nodal}-\psi$ is nef on $\Mg{0,\A}$.
\end{prop}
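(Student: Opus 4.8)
The plan is to test the divisor on one-parameter families. Following the proof of Theorem \ref{T:positive-divisors}, since $\Delta_{nodal}$ and $\psi$ are functorial with respect to the boundary stratification, it suffices to show that $2\Delta_{nodal}-\psi$ has non-negative degree on every family $\pi\co(\C;\sigma_1,\dots,\sigma_6)\ra B$ of $\A$-stable curves whose total space $\C$ is irreducible. For such a family I would first translate everything into surface geometry. Write $\delta=\deg_B\Delta_{nodal}$ for the number of nodes in the fibres and $s=\sum_i\Sigma_i^2$. Since $\lambda=0$ in genus $0$, Mumford's relation gives $\kappa=-\Delta_{nodal}$, hence $\omega_{\C/B}^2=\deg_B\kappa=-\delta$, while $\deg_B\psi_i=-\Sigma_i^2$ gives $\deg_B\psi=-s$. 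Therefore
\[
\deg_B\bigl(2\Delta_{nodal}-\psi\bigr)=2\delta+s,
\]
and the entire problem reduces to proving $s\geq-2\delta$.

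The next step is a classification of fibres. For the democratic weight $\A=(1/2,\dots,1/2)$ a component carrying $k$ nodes and $m$ markings is stable only if $\tfrac{m}{2}+k>2$; for a tail ($k=1$) this forces $m\geq3$. An elementary count then shows that every $\A$-stable genus-$0$ curve is either an irreducible $\PP^1$ or a union of two $\PP^1$'s meeting in a single node with the markings split $3+3$. In particular each reducible fibre has exactly one node, both of its components are $(-1)$-curves, and $\delta$ is the number of reducible fibres. I would then pass to the relative minimal model $f\co\C\ra\C_\min$, which is a $\PP^1$-bundle by Tsen's theorem, contracting one $(-1)$-curve $C''$ in each reducible fibre. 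The three sections meeting $C''$ are carried to a single point $p_b$ of the corresponding smooth fibre. Writing $\bar\Sigma_i=f(\Sigma_i)$, $\bar s=\sum_i\bar\Sigma_i^2$ and $\bar p=\sum_{i<j}\bar\Sigma_i\cdot\bar\Sigma_j$, each of the $\delta$ blow-downs raises the self-intersection of the three sections through $p_b$ by one, so $\bar s-s=3\delta$ and
\[
\deg_B\bigl(2\Delta_{nodal}-\psi\bigr)=2\delta+s=\bar s-\delta.
\]

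The final step is to bound $\bar s$ on the ruled surface $\C_\min$. Any two sections of a $\PP^1$-bundle differ numerically by a multiple of the fibre, so $(\bar\Sigma_i-\bar\Sigma_j)^2=0$ and hence $\bar\Sigma_i\cdot\bar\Sigma_j=\tfrac12(\bar\Sigma_i^2+\bar\Sigma_j^2)$ for every pair. Summing over all pairs yields the identity $\bar p=\tfrac{n-1}{2}\bar s=\tfrac52\bar s$. On the other hand the three sections through each of the $\delta$ distinct points $p_b$ contribute at least $\binom{3}{2}=3$ to $\bar p$, so $\bar p\geq3\delta$. Combining the two gives $\bar s=\tfrac{2}{5}\bar p\geq\tfrac65\delta$, whence $\deg_B(2\Delta_{nodal}-\psi)=\bar s-\delta\geq\tfrac15\delta\geq0$, which completes the argument. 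Proposition \ref{P:relatively-nef} enters through the nefness of $L_\min=\omega_{\C_\min/B}+\tfrac12\sum_i\bar\Sigma_i$ on $\C_\min$, which gives $L_\min^2=\tfrac12\bar s\geq0$ and serves as a consistency check on the weaker bound $\bar s\geq0$.

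The main difficulty I anticipate is precisely the bookkeeping across the minimal model: because a stable tail must carry three markings, it is \emph{three} sections—not two—that collide at the node $p_b$ once one passes to the ruled surface. This is what makes a naive "$2\delta+s$ is non-decreasing under blow-up'' induction fail (a single blow-up separating three coincident sections changes the degree by $2-3<0$), and it is the reason the quantitative identity $\bar p=\tfrac{n-1}{2}\bar s$ together with the counting bound $\bar p\geq3\delta$ is essential rather than the mere positivity $\bar s\geq0$. Verifying the fibre classification carefully and confirming that exactly the $3+3$ splitting occurs is the part I would check most scrupulously.
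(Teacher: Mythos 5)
Your reduction to irreducible total spaces, the classification of the reducible fibres as $3+3$ splits, and the minimal-model bookkeeping $\deg_B(2\Delta_{nodal}-\psi)=\bar s-\delta$ together with the identity $\bar p=\tfrac{5}{2}\bar s$ are all correct, and on families in which the six sections are pairwise distinct curves your argument goes through and is a genuinely different (and rather clean) route from the paper's, which instead expands $(\omega+\Sigma)^2\geq 0$ and invokes the relation $2\Delta_{nodal}-\psi=\tfrac19(\psi+4\Delta_s)$. (Two small imprecisions there: when $\C$ has $A_{k-1}$ singularities at the nodes, $\delta=\deg_B\Delta_{nodal}$ is the number of nodes counted with thickness, not the number of reducible fibres, though your identities survive after passing to the minimal resolution; and the appeal to ``functoriality'' for the reducible-total-space case should be replaced by the direct computation, since the restriction of $2\Delta_{nodal}-\psi$ to a boundary divisor acquires $\psi$-classes at the attaching nodes.)

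The genuine gap is that you never address coincident sections, which $\A$-stability permits here: two sections of weight $\tfrac12$ may satisfy $\Sigma_i=\Sigma_j$, and a curve $B$ lying inside $\Delta_{ij}$ produces exactly such a family. In that situation both ingredients of your key bound $\bar p\geq 3\delta$ fail. First, the term $\bar\Sigma_i\cdot\bar\Sigma_j=\bar\Sigma_i^2$ is a single global quantity rather than a sum of non-negative local multiplicities, and it is typically the \emph{negative} section of the ruled surface $\C_\min$, so it can be an arbitrarily large negative contribution to $\bar p$. Second, at a node $p_b$ whose contracted tail carries the markings $\{i,j,\ell\}$ with $\Sigma_i=\Sigma_j$, only the two pairs $(i,\ell)$ and $(j,\ell)$ acquire local intersection $k_b$, so the point contributes $2k_b$ rather than $3k_b$. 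Quantitatively, $\bar p\geq 3\delta$ is equivalent to $\sum_{i<j}\Sigma_i\cdot\Sigma_j\geq 0$ on $\C$ itself, i.e.\ to $s\geq-\tfrac95\delta$, which is strictly stronger than the inequality $s\geq-2\delta$ you are trying to prove; nothing in your argument rules out a family with a coincident pair for which the intermediate bound fails even though the conclusion holds. This is precisely where the paper spends most of its effort: it treats one coincident pair via the auxiliary nef product $(\omega+\tfrac13\Sigma)\cdot(\omega+\tfrac12\Sigma_{\rest})$, using that the coincident pair is forced to be disjoint from the remaining sections, and treats two coincident pairs by a replacement morphism reducing to $\Mg{0,(1,1,1/2,1/2)}\cong\PP^1$. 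You would need to supply an argument of comparable strength for these cases before your proof is complete.
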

\begin{proof}
First, note that the relation $5\psi+2\Delta_s-9\Delta_{nodal}$ holds 
in the Picard group of $\Mg{0,\A}$. It follows that 
\begin{align}
\label{relation-1}
2\Delta_{nodal}-\psi=\frac{1}{9}(\psi+4\Delta_s)=\frac{2}{3}\psi+\frac{2}{3}\Delta_s-\Delta.
\end{align}
Consider a family $\C\ra B$ of $\A$-stable curves over a smooth proper 
curve $B$. 
If the total space $\C$ is not irreducible, 
then it is necessarily a union of two $(1,1/2,1/2,1/2)$-stable families over $B$. One
sees readily that $\psi=-3\Delta_{nodal}$ and $\Delta_{nodal}>0$ in this case, and so $2\Delta_{nodal}-\psi>0$.

Suppose that $\C$ is an irreducible surface. If all sections have positive self-intersection,
we are done. If there are no coincident sections, then $\Delta_s\geq 0$ and 
by Proposition \ref{P:relatively-nef} 
the divisor $\omega+\Sigma$ is nef on $\C$. It follows that 
$\psi+2\Delta_s-\Delta_{nodal}=(\omega+\Sigma)^2 \geq 0$. In particular, $\psi+4\Delta_s\geq 0$,
and we are done.

Suppose now that there are two coincident sections $\Sigma_1=\Sigma_2$, necessarily
of negative self-intersection and disjoint from $\Sigma_\rest=\sum_{i=3}^6 \Sigma_i$.
The fibered surface $\pi\co \C\ra B$ is obtained by successive blow-ups from a 
$\PP^1$-bundle 
over $B$ with a negative section $\Sigma_1$. It follows that the divisor 
$\omega+\frac{1}{3}\Sigma$ is an effective combination of the fiber class and $(-1)$-curves
in the fibers of $\pi$. Moreover, these $(-1)$-curves are disjoint from $\Sigma_1$. 
By the stability assumption, $\omega+\frac{1}{2}\Sigma_\rest$ has intersection $0$ with 
the fiber class and positive intersection with $(-1)$-curves disjoint from $\Sigma_1$. We
conclude that 
\begin{align*}
0\leq (\omega+\frac{1}{3}\Sigma)\cdot (\omega+\frac{1}{2}\Sigma_\rest) &= \frac{2}{3}\psi_1
+\frac{2}{3}\sum_{i=3}^6 \psi_i+\frac{1}{3}\sum_{3\leq i<j\leq 6}\Delta_{ij}-\Delta_{nodal}.
\end{align*}
If sections $\{\Sigma_i\}_{i=3}^6$
 are distinct, then 
 \begin{multline*}
 \frac{2}{3}\psi_1
+\frac{2}{3}\sum_{i=3}^6 \psi_i+\frac{1}{3}\sum_{3\leq i<j\leq 6}\Delta_{ij}-\Delta_{nodal} \\
\leq \frac{2}{3}\psi_1
+\frac{2}{3}\sum_{i=3}^6 \psi_i+\frac{2}{3}\sum_{3\leq i<j\leq 6}\Delta_{ij}-\Delta_{nodal}
 =\frac{2}{3}\psi+\frac{2}{3}\Delta_s-\Delta_{nodal},
 \end{multline*}
and we are done because of Equation \eqref{relation-1}.
 
 Finally, if there are two pairs of coincident sections among $\{\Sigma_i\}_{i=1}^6$, then by 
 replacing every pair of coincident sections by a section of weight $1$,
 we reduce to proving that the divisor $\frac{2}{3}\psi+\frac{2}{3}\Delta_s-\Delta_{nodal}$
 is nef on $\Mg{0, (1,1,1/2,1/2)}$. The space under consideration is isomorphic $\PP^1$.
 The degree of  $\frac{2}{3}\psi+\frac{2}{3}\Delta_s-\Delta_{nodal}$ on the
 universal family is $4/3+2/3-2=0$. 
 \end{proof}

\subsection*{Acknowledgements} 
We would like to thank David Smyth for discussions and comments regarding this work and Brendan Hassett for pointing out an error in a previous version.

\end{document}